\documentclass[11pt]{extarticle}

\usepackage{amssymb,amsfonts,amsthm}
\usepackage{amsmath}
\usepackage{mathrsfs}
\usepackage[top=1in, bottom=1.25in, left=0.75in, right=0.75in]{geometry}
\usepackage{authblk}
\usepackage{hyperref}

\newtheorem{theorem}{Theorem}[section]
\newtheorem{lemma}[theorem]{Lemma}

\newtheorem{conjecture}[theorem]{Conjecture}
\theoremstyle{definition}

\newtheorem{proposition}[theorem]{Proposition}

\theoremstyle{remark}

\numberwithin{equation}{section}

\newcommand{\E}{\mathbb{E}}
\newcommand{\Pp}{\mathbb{P}}
\newcommand{\Z}{\mathbb{Z}}
\newcommand{\TV}{\mathrm{TV}}
\newcommand{\alphabet}{\mathfrak{A}}
\newcommand{\Words}{\mathcal{W}}
\newcommand{\Ent}{\mathsf{H}}
\newcommand{\Info}{\mathsf{I}}
\newcommand{\inP}{\xrightarrow{\ \Pp\ }}

\DeclareMathOperator{\Var}{Var}
\DeclareMathOperator{\Varent}{Varent}
\DeclareMathOperator{\tr}{tr}
\DeclareMathOperator{\Cay}{Cay}

\begin{document}

\title{Cutoff for random Cayley graphs of finite groups}

\author{Haoran Zhu\thanks{Division of Mathematical Sciences, School of Physical and Mathematical Sciences, Nanyang Technological University, 21 Nanyang Link, Singapore 637371. }}%

\date{}

\maketitle

\begin{abstract}
We study the random walk on a Cayley graph of a finite group whose generators are chosen independently and uniformly at random.
We show that, for every finite group, the walk exhibits cutoff with high probability whenever the number of generators is at least a constant multiple of the logarithm of the order of the group and small compared with every positive power of it.
This holds for both undirected and directed Cayley graphs, in discrete and continuous time, and the lower bound on the number of generators is best possible, up to the value of the constant.
The cutoff time is the time at which the expected entropy of the walk comes close to its maximum, and in general it depends on the structure of the group.
This settles a conjecture of Aldous and Diaconis.
\end{abstract}

\section{Introduction}\label{sec:intro}

Many random walks on finite groups \cite{Dia88,SC04} converge to equilibrium abruptly: the total variation distance from the uniform distribution stays close to one for a long time and then drops to near zero within a time interval of smaller order than the mixing time.
This behaviour is known as the \textit{cutoff phenomenon}, and the time interval as the \textit{cutoff window}.
Classical examples include random transpositions, studied by Diaconis and Shahshahani \cite{DS81}, and riffle shuffles, studied by Aldous \cite{Ald83} and by Bayer and Diaconis \cite{BD92}.
The term itself was coined by Aldous and Diaconis \cite{AD86}.
Understanding which Markov chains exhibit cutoff is a central problem in the theory of mixing times; see the survey of Diaconis \cite{Dia96} and \cite[Chapter~18]{LP17}.

In 1985, Aldous and Diaconis \cite[p.~40]{AD85} asked whether cutoff is typical among random walks on finite groups, and suggested testing this by choosing the generators at random.
They conjectured that the resulting walk exhibits cutoff with high probability whenever the number of generators is large compared with the logarithm of the order of the group, but small compared with every positive power of it.
We state the conjecture in the form given in \cite[Section~1.1.1]{HOT26}, which applies to both undirected and directed Cayley graphs, and give precise definitions of the walks and of cutoff later in this section.
The conjecture is also stated in this form for undirected Cayley graphs in \cite[Section~1.1.1]{HH24} and \cite[Conjecture~1]{HR25}.

\begin{conjecture}\label{conj:AD}
Let $G$ range over a sequence of finite groups of order $n\to\infty$, and let $k=k(G)$ satisfy $k/\log n\to\infty$ and $\log k/\log n\to0$.
Then the random walks on the Cayley graph and on the directed Cayley graph of $G$ generated by $k$ elements chosen independently and uniformly at random exhibit cutoff with high probability.
\end{conjecture}

Aldous and Diaconis also suggested that the cutoff time should depend only on $n$ and $k$.
This turned out to be false: Hermon and Olesker-Taylor \cite[Theorem~A]{HOT19} showed that, for groups of unit upper triangular matrices, the cutoff time also depends on the order of the abelianisation.
In this paper, we prove Conjecture~\ref{conj:AD} for all finite groups, and we identify the cutoff time in terms of the entropy of the walk (Theorem~\ref{thm:main}).
In fact, we show that the conclusion of the conjecture holds whenever $k\geqslant2^{14}\log n$ and $\log k/\log n\to0$, in discrete and continuous time.

Conjecture~\ref{conj:AD} has stimulated a large body of work on random walks with random generators, known as random random walks \cite{DH96,Gre97,H94,Pak99,R96,Wil97}; see the survey of Hildebrand \cite{H05}.
Dou and Hildebrand \cite[Theorem~1]{DH96} proved an upper bound on the mixing time that is valid for every group, and Roichman \cite[Theorems~1 and~2]{R96} later gave a simpler proof.
Hildebrand \cite[Theorem~3]{H94} proved a matching lower bound for abelian groups, and Dou and Hildebrand \cite[Theorem~4]{DH96} extended it to some families of groups whose irreducible representations have bounded degree.
Together, these results give cutoff at time $\log n/\log(k/\log n)$ for abelian groups and for these families of groups when $k\geqslant(\log n)^{1+\delta}$ for some constant $\delta>0$.
When $\log k/\log\log n\to\infty$, this upper bound agrees to leading order with the lower bound given by the diameter, so the conjecture holds for every group in this range \cite[Section~1.3.1]{HOT26}.
In a related direction, Alon and Roichman \cite{AR94} showed that a Cayley graph generated by a logarithmic number of random elements is an expander with high probability.

More recently, Hermon and Olesker-Taylor related the mixing of random walks with random generators to the entropy of an auxiliary random walk on $\Z^k$, which records how many times each generator has been applied, counted with sign.
Using this approach, they established cutoff for all abelian groups when $k\to\infty$, $k=O(\log n)$, and $k-d(G)\to\infty$, where $d(G)$ is the minimal size of a generating set of $G$ \cite[Theorem~A]{HOT26}.
They also treated groups of unit upper triangular matrices and Heisenberg groups \cite{HOT19}, and Hermon and Huang \cite[Theorem~1]{HH24} extended these results to nilpotent groups of bounded rank and nilpotency class.
Huang and Rao \cite[Theorems~1.3 and~1.4]{HR25} treated dihedral groups of order twice a prime and, when $k/\log n\to\infty$, virtually abelian groups with an abelian normal subgroup of bounded index.
All of these groups are close to abelian, and for an abelian group the walk at time $t$ is a function of the generators and of the auxiliary walk at time $t$.
For a general group, however, the walk also depends on the order in which the generators are applied, and Conjecture~\ref{conj:AD} has remained open whenever $k$ is polylogarithmic in $n$.

Entropy also plays a central role in the study of cutoff for random walks on random graphs and on Ramanujan graphs, and for related sparse Markov chains \cite{BHS17,BLPS18,BCS19,BL22,LP16,LS10}.
In each case, cutoff has been established at an entropic time, determined by the growth of the entropy of the walk.
For a random walk on a group of order $n$, the heuristic is as follows.
As long as the information content of the walk, that is, minus the logarithm of the probability of its current position, is typically smaller than $\log n$ by a large margin, the walk is confined to a small part of the group and is far from being mixed.
Once the information content is typically close to $\log n$, the walk is spread fairly evenly over the group, and a spectral gap ensures that the walk is mixed shortly afterwards.
Thus, if the information content is concentrated around its mean, which is the entropy, and the spectral gap is not too small, the walk should exhibit cutoff.
Salez \cite[Theorem~5]{S24} made this precise under the name of entropic concentration, using the \textit{varentropy}, which is the variance of the information content.
He showed that a sequence of Markov chains exhibits cutoff whenever the relaxation time, multiplied by one plus the square root of the varentropy, is of smaller order than the mixing time.
Using this criterion, he established cutoff for random Cayley graphs of abelian groups when $k\geqslant(1+\delta)\log_2n$ for some constant $\delta>0$ and $\log k=o\bigl((\log n)^{1/3}\bigr)$ \cite[Corollary~4]{S24}.

Hermon, Huang, Pedrotti, and Salez \cite[Theorem~1 and Corollary~1]{HHPS26} showed that, for a conjugacy-invariant random walk on a group, the varentropy is at most an absolute constant times the number of generators, uniformly in time and in the size of the group.
A random walk on a random Cayley graph of a non-abelian group is, however, not conjugacy-invariant in general, so this bound does not apply, and bounding the varentropy becomes the main difficulty.

Our main result, Theorem~\ref{thm:main}, shows that cutoff occurs at the entropic time $t_*$ defined in \eqref{eq:centre}, the first time at which the expected entropy of the discrete-time walk comes within $2(\log n)^{2/3}$ of its maximum value $\log n$.
Loosely speaking, the walk mixes as soon as it is, in terms of entropy, almost as spread out as the uniform distribution.
Our second main result, Theorem~\ref{thm:information}, bounds the variance of the information content of the walk, under the joint law of the generators and the walk, uniformly over all finite groups.
Using Theorem~\ref{thm:information}, we also bound the cutoff window (Theorem~\ref{thm:window}).

To make the setting precise, let $G$ be a finite group of order $n$ with identity $\mathrm{id}$, and let $k$ be a positive integer.
We write $[k]=\{1,\dots,k\}$, and all logarithms are natural.

Let $\alphabet=[k]\times\{-1,1\}$.
For $z=(z_1,\dots,z_k)\in G^k$, the \textbf{Cayley graph} $\Cay(G,z)$ is the multigraph with vertex set $G$ in which each vertex $x$ is joined to $xz_i^{\varepsilon}$ for every $(i,\varepsilon)\in\alphabet$.
Parallel edges and loops are allowed, so $\Cay(G,z)$ is regular of degree $2k$; see also~\cite[Definition~1]{HH24} and \cite[Remark~1.2]{HR25}.
At each step, the simple random walk on $\Cay(G,z)$ moves from its current position $x$ to $xz_i^{\varepsilon}$, where $(i,\varepsilon)$ is chosen uniformly at random from $\alphabet$.
Its transition matrix $P_{z}$ is symmetric, so the walk is reversible with respect to the uniform distribution $\pi$ on $G$, and $\pi$ is stationary.
The \textbf{directed Cayley graph} has an edge from each vertex $x$ to $xz_i$ for every $i\in[k]$, and the simple random walk on it is defined in the same way, with $\alphabet$ replaced by $[k]\times\{1\}$.
We call these two walks the \textit{undirected walk} and the \textit{directed walk}.
The transition matrix $P_{z}$ of the directed walk need not be symmetric, but $\sum_{x\in G}P_{z}(x,y)=1$ for every $y\in G$, so $\pi$ is stationary for both walks.

It is convenient to describe the walk by words: for each integer $m\geqslant0$, let $\Words_m=\alphabet^m$ be the set of \textbf{words} of length $m$.
The value at $z$ of a word $w=(w_1,\dots,w_m)$ with letters $w_r=(i_r,\varepsilon_r)$ is
\[
w(z)=z_{i_1}^{\varepsilon_1}z_{i_2}^{\varepsilon_2}\cdots z_{i_m}^{\varepsilon_m},
\]
and the value of the empty word is $\mathrm{id}$.
If a word is chosen uniformly at random from $\Words_m$, then its letters are independent and uniformly distributed on $\alphabet$, so its value at $z$ has law $q_{z,m}=P_{z}^m(\mathrm{id},\cdot)$, the distribution after $m$ steps of the walk started at $\mathrm{id}$.
We also consider the continuous-time walk, which jumps at rate $1$, and its distribution at time $t\geqslant0$, started at $\mathrm{id}$, is
\begin{equation}\label{eq:poisson-law}
p_{z,t}=\sum_{m=0}^{\infty}e^{-t}\frac{t^m}{m!}\,q_{z,m}.
\end{equation}

The \textbf{total variation distance} between probability measures $\mu$ and $\nu$ on $G$ is
\[
\|\mu-\nu\|_{\TV}=\frac12\sum_{x\in G}|\mu(x)-\nu(x)|.
\]
Our results hold for both walks, in discrete and continuous time, and we use the same notation in all cases: we write $\mu_{z,t}$ for the distribution at time $t\geqslant0$ of the walk under consideration, and $d_{z}(t)=\|\mu_{z,t}-\pi\|_{\TV}$.
Thus $\mu_{z,t}=q_{z,\lfloor t\rfloor}$ in discrete time, and $\mu_{z,t}=p_{z,t}$ in continuous time.
Left multiplication by any element of $G$ is an automorphism of both graphs, so this distance does not depend on the starting vertex.
For $\varepsilon\in(0,1)$, the \textbf{mixing time} is $t_{\mathrm{mix}}^{z}(\varepsilon)=\inf\{t\geqslant0:d_{z}(t)\leqslant\varepsilon\}$.

Let $Z=(Z_1,\dots,Z_k)$, where $Z_1,\dots,Z_k$ are chosen independently and uniformly at random from $G$.
We consider sequences of finite groups with $n\to\infty$, where $k=k(G)$ may depend on $G$.
An event holds with \textbf{high probability} if its probability tends to one, and $\inP$ denotes convergence in probability.
Following \cite[Section~1.2.1]{HOT26}, we say that the walk \textit{exhibits cutoff with high probability at time} $\tau$, where $\tau=\tau(G)$ is a positive real number, if
\[
d_{Z}\bigl((1-\delta)\tau\bigr)\inP1\qquad\text{and}\qquad d_{Z}\bigl((1+\delta)\tau\bigr)\inP0
\]
for every fixed $\delta\in(0,1)$, and that it \textit{exhibits cutoff with high probability} if this holds for some choice of $\tau$.
Since $d_{Z}$ is non-increasing, cutoff with high probability at time $\tau$ is equivalent to $t_{\mathrm{mix}}^{Z}(\varepsilon)/\tau\inP1$ for every fixed $\varepsilon\in(0,1)$.
Throughout, $C$ denotes a positive absolute constant whose value may change from one occurrence to the next.
For a random variable $Y$ and an event $E$, we write $\E[Y;\,E]=\E[Y\mathbf 1_E]$, where $\mathbf 1_E$ is the indicator function of $E$.
For positive quantities $a$ and $b$ depending on $G$, we write $a\sim b$ if $a/b\to1$.

For a probability measure $\mu$ on a finite set, we write $\Ent(\mu)=-\sum_x\mu(x)\log\mu(x)$ for its \textbf{entropy}, with the convention $0\log0=0$.
The \textbf{expected entropy} of the walk after $m$ steps is $h_m=\E\,\Ent(q_{Z,m})$, and the \textbf{entropic time} is
\begin{equation}\label{eq:centre}
t_*=\min\bigl\{m\in\Z_{\geqslant0}:h_m\geqslant\log n-2(\log n)^{2/3}\bigr\},
\end{equation}
with $t_*=\infty$ if no such $m$ exists.
Note that $t_*$ is deterministic: it depends only on $G$ and $k$.
Our main result is the following.

\begin{theorem}\label{thm:main}
Let $G$ range over a sequence of finite groups of order $n\to\infty$, and let $k=k(G)$ satisfy
\begin{equation}\label{eq:regime}
k\geqslant2^{14}\log n\qquad\text{and}\qquad\frac{\log k}{\log n}\to0.
\end{equation}
For each $G$, let $Z=(Z_1,\dots,Z_k)$, where $Z_1,\dots,Z_k$ are chosen independently and uniformly at random from $G$.
Then, for both the undirected and the directed walks, in discrete and continuous time, $t_*$ is finite for all sufficiently large $n$, and the walk exhibits cutoff with high probability at time $t_*$.
\end{theorem}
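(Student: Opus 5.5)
The plan follows the entropic-concentration heuristic of the introduction. There are three ingredients: a lower bound from information content, an upper bound from a spectral gap, and a concentration estimate tying the two to $t_*$.

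\emph{Step 1: $t_*$ is finite and of order $\log n/\log(k/\log n)$.} The upper bound of Dou and Hildebrand/Roichman gives $\E\,d_Z(m)\to0$ for $m\geqslant C\log n/\log(k/\log n)$. Continuity of entropy (Fannes-type) then gives $h_m\geqslant\log n-o(1)$ at such $m$. In the other direction, $\Ent(q_{z,m})\leqslant m\log(2k)$, so
\[
t_*\geqslant\frac{\log n}{2\log(2k)}\to\infty .
\]
Since $\log k=o(\log n)$, this shows that $t_*$ is finite and that $t_*\to\infty$, which is what allows windows of size $o(t_*)$ later.

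\emph{Step 2: spectral gap.} By Alon--Roichman, with $k\geqslant2^{14}\log n$, with high probability $\Cay(G,Z)$ has spectral gap bounded below by an absolute constant. For the directed walk we use the same bound for the symmetrised operator $P_ZP_Z^*$, which controls $L^2$ contraction. The relaxation time is therefore $O(1)=o(t_*)$.

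\emph{Step 3: cutoff at $t_*$.} Let $I_m=-\log q_{Z,m}(X_m)$ be the information content, where $X_m$ is the walk. We argue the two directions separately.
\begin{itemize}
\item \emph{Lower bound.} At time $m=(1-\delta)t_*$ we need $h_m\leqslant\log n-c\,\delta\log n$. This requires the growth of $h_m$ in $m$ to be roughly linear before $t_*$. We would try to get it from subadditivity or concavity of $m\mapsto h_m$, using that increments decrease and that $h_{t_*}$ is close to $\log n$. If Markov's inequality shows that $I_m\leqslant\log n-\tfrac12c\delta\log n$ with high probability, then the set $\{x:q(x)\geqslant n^{-1}e^{c\delta\log n/2}\}$ carries most of the mass and has $\pi$-measure $o(1)$. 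Hence $d_Z\to1$.
\item \emph{Upper bound.} At $t_*$, we need $I_{t_*}\geqslant\log n-o(\log n)$ with high probability. Truncating the law where $q$ is large then yields a measure with small $L^2$-distance to $\pi$ after dividing out a small TV error. Running $O(\log n)^{2/3}\cdot O(1)=o(t_*)$ further steps with the spectral gap contracts the $L^2$ norm, so $d_Z\to0$ at $(1+\delta)t_*$.
\item \emph{Continuous time.} Poisson concentration of the number of jumps, which is $t\pm O(\sqrt t)$, transfers both directions from the discrete-time statements.
\end{itemize}

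\emph{Main obstacle: concentration.} Both directions need $I_m$ to be concentrated within $o(\log n)$ of $h_m$, jointly over $Z$ and the walk. This is Theorem~\ref{thm:information}, the varentropy bound, together with concentration of $\Ent(q_{Z,m})$ around $h_m$ as $Z$ varies. For the latter I would use a martingale over revealing $Z_1,\dots,Z_k$ (Efron--Stein). Resampling one generator changes the law on the event that the word uses letter $i$, which has probability about $m/k$. That event should shift entropy by roughly $O(m/k\cdot\log n)$, so the variance is $O(m^2\log^2 n/k)=o(\log^2 n)$. Bounding the variance of $-\log q$ along the walk for non-conjugacy-invariant walks is the real difficulty. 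I would attack it by decomposing words by their reduced structure, exploiting that distinct generators are independent uniform elements, and using Efron--Stein over the letters of the word.
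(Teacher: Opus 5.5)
Your architecture matches the paper's: entropic concentration from Theorem~\ref{thm:information}, truncation of the density, Alon--Roichman $L_2$ contraction, and Poisson transfer. The assembly, however, has a genuine gap in the lower bound. You ask for $h_{(1-\delta)t_*}\leqslant\log n-c\delta\log n$ and propose to get it from concavity or subadditivity of $m\mapsto h_m$, but both point the wrong way. Concavity with $h_0=0$ gives $h_m\geqslant(m/t_*)\,h_{t_*}$ for $m\leqslant t_*$, and subadditivity gives $h_m\geqslant h_{t_*}-h_{t_*-m}$, so each bounds $h_{(1-\delta)t_*}$ from \emph{below}. What you would need is a lower bound on the increments $h_{m+1}-h_m$ near $t_*$, and nothing in your plan supplies one.

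In fact no growth estimate is needed, because the threshold $2(\log n)^{2/3}$ in \eqref{eq:centre} is designed so that the lower bound can be proved at $t_*-1$. By minimality, $h_{t_*-1}<\log n-2(\log n)^{2/3}$. Theorem~\ref{thm:information} gives a standard deviation $O(\sqrt{t_*}\log(2k))=o((\log n)^{2/3})$, in the regime discussed below. By Chebyshev, the walk at time $t_*-1$ then lies with high probability in $\{x:q_{Z,t_*-1}(x)>e^{(\log n)^{2/3}}/n\}$, a set of $\pi$-measure less than $e^{-(\log n)^{2/3}}$. Hence $\E\,d_Z(t_*-1)\to1$, and monotonicity of $d_Z$ covers $(1-\delta)t_*\leqslant t_*-1$.

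The same scale is forced in the upper bound. Concentration within $o(\log n)$, as you state it, leaves a deficit costing $o(\log n)$ extra steps, which need not be $o(t_*)$, since $t_*$ can be of order $\log n/\log k$. You need a deficit of $O((\log n)^{2/3})$, which your step count tacitly assumes.

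The second gap is the range of $k$. Your argument needs $\sqrt{t_*}\log(2k)=o((\log n)^{2/3})$ and $(\log n)^{2/3}=o(t_*)$, and both fail for large $k$ allowed by \eqref{eq:regime}. If $\log k\geqslant(\log n)^{1/3}$, then Lemma~\ref{lem:centre-bounds} gives $t_*\leqslant(1+o(1))(\log n)^{2/3}$. On the other hand, $t_*\geqslant(\log n-2(\log n)^{2/3})/\log(2k)$ gives $\sqrt{t_*}\log(2k)\geqslant(1-o(1))(\log n)^{2/3}$. So Theorem~\ref{thm:information} is too weak there, and a separate argument is required. The paper splits at $\log k=(\log\log n)^2$. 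Above the split, the diameter bound $\log n/\log k$ and the Roichman bound $\log n/\log(k/\log n)$ agree to leading order, and $t_*$ is sandwiched between them (Lemmas~\ref{lem:centre-bounds} and~\ref{lem:large-range}).

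Three smaller points:
\begin{itemize}
\item Your Efron--Stein bound $O(m^2\log^2n/k)$ for $\Var\Ent(q_{Z,m})$ is not $o(\log^2n)$ when $k$ is of order $\log n$, since then $m^2/k\to\infty$ for $m\approx t_*$. It is also unnecessary: Theorem~\ref{thm:information} bounds the variance of $\Info_m$ under the joint law of $(Z,W)$, which by the law of total variance already includes $\Var\Ent(q_{Z,m})$.
\item In Step~1, a Fannes-type bound from $\E\,d_Z(m)\to0$ only gives a deficit of order $\varepsilon\log n$, not $o(1)$. Use $\log n-\Ent(\mu)\leqslant\|\mu-\pi\|_2^2$ together with Lemma~\ref{lem:collision} instead.
\item If you meant to prove Theorem~\ref{thm:information} rather than cite it, your sketch lacks its key idea. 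That idea is the change of coordinates to the positions $U_{j}=w_{(0,j]}(z)$ just after singleton letters.
\end{itemize}
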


In particular, Conjecture~\ref{conj:AD} holds for every sequence of finite groups, and $t_{\mathrm{mix}}^{Z}(\varepsilon)/t_*\inP1$ for every fixed $\varepsilon\in(0,1)$.
In the range \eqref{eq:regime}, Theorem~\ref{thm:main} extends the cutoff results described above for abelian, nilpotent, and virtually abelian groups to all finite groups.

The first condition in \eqref{eq:regime} is best possible, up to the value of the constant $2^{14}$, and the second is necessary for cutoff in continuous time.
Indeed, if $G=(\Z/2\Z)^d$ and $k<d$, then $Z$ generates a subgroup of order at most $2^k<n$, to which the walk is confined.
If instead $\log k/\log n$ is bounded away from zero, then the mixing time is bounded with high probability \cite[Section~7.2]{HOT26}, and the continuous-time walk cannot exhibit cutoff, since $p_{z,t}(\mathrm{id})\geqslant e^{-t}$ for every $z\in G^k$ and every $t\geqslant0$.

Wilson \cite[Theorem~1]{Wil97} established cutoff for $(\Z/2\Z)^d$.
In the form given by Hermon and Olesker-Taylor \cite[Section~1.1.1]{HOT26}, his conjecture \cite[Conjecture~7]{Wil97} asserts that, when $k-\log_2n\to\infty$ and $\log k/\log n\to0$, this group gives rise to the slowest mixing time among all groups of order at most $2^d$.
Hermon and Olesker-Taylor verified this conjecture for nilpotent groups \cite[Theorem~C]{HOT26}, and in work in preparation \cite{Zhu26}, we prove it for all finite groups, for both undirected and directed Cayley graphs.

When $k/\log n\to\infty$, by Lemma~\ref{lem:centre-bounds}, the entropic time lies between two classical bounds:
\[
(1-o(1))\frac{\log n}{\log k}\leqslant t_*\leqslant(1+o(1))\frac{\log n}{\log(k/\log n)}.
\]
The lower bound comes from the diameter, and the upper bound is the cutoff time for abelian groups.
Since cutoff times are unique up to a factor $1+o(1)$, Theorem~\ref{thm:main} and \cite[Section~1.3.1]{HOT26} show that $t_*$ is asymptotic to the upper bound for abelian groups.
On the other hand, by \cite[Theorem~A]{HOT19}, it is asymptotic to the lower bound for the groups of $d\times d$ unit upper triangular matrices over $\Z/p\Z$ when $k$ exceeds a suitable power of $\log n$, where $p$ is prime and $d\geqslant3$ is bounded.

The two bounds have a simple interpretation in terms of entropy.
Each step increases the entropy of the walk by at most $\log(2k)$, with near equality as long as distinct words typically take distinct values.
Hence the entropy cannot reach $\log n$ in fewer than $\log n/\log(2k)$ steps, and this number is asymptotic to the lower bound.
For an abelian group, the walk is determined by the auxiliary walk on $\Z^k$, whose entropy after $m$ steps is about $m\log(k/m)$ when $m$ is small compared with $k$.
This entropy reaches $\log n$ when $m\log(k/m)=\log n$, and the solution of this equation is asymptotic to the upper bound.
In general, where $t_*$ lies between the two bounds depends on how often distinct words take the same value, that is, on the structure of $G$.

Next, we state our second main result, from which we deduce Theorem~\ref{thm:main}.
If $X$ is a random variable with law $\mu$ on a finite set, then $-\log\mu(X)$ is the \textbf{information content} of $X$.
Its mean is $\Ent(\mu)$, and its variance is the varentropy $\Varent(\mu)$ of $\mu$ \cite{HHPS26,S24}.
For an integer $m\geqslant0$, let $W$ be chosen uniformly at random from $\Words_m$, independently of $Z$, and let
\[
\Info_m=-\log q_{Z,m}\bigl(W(Z)\bigr).
\]
Conditional on $Z$, the random variable $\Info_m$ is the information content of the walk after $m$ steps.
We write $\Var(\Info_m)$ for the variance of $\Info_m$ under the joint law of $Z$ and $W$.

\begin{theorem}\label{thm:information}
Let $G$ be a finite group, and let $k$ and $m$ be integers with $1\leqslant m\leqslant k/4096$.
Then
\[
\Var(\Info_m)\leqslant Cm\log^2(2k),
\]
where $C$ is an absolute constant.
\end{theorem}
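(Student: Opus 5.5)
We will bound the variance of $\Info_m$ by a martingale (Doob) decomposition over the $2m$-ish sources of randomness, combined with an Efron--Stein type bound. We view $\Info_m$ as a function of the independent inputs $Z_1,\dots,Z_k$ and the letters $W_1,\dots,W_m$. The key observation is that only the generators actually used by $W$ matter in a first-order sense. The plan is to apply the Efron--Stein inequality
\[
\Var(\Info_m)\leqslant\frac12\sum_{r=1}^m\E\bigl[(\Info_m-\Info_m^{(r)})^2\bigr]+\frac12\sum_{i=1}^k\E\bigl[(\Info_m-\Info_m^{[i]})^2\bigr],
\]
where $\Info_m^{(r)}$ resamples the letter $W_r$ and $\Info_m^{[i]}$ resamples $Z_i$. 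We aim to show that each letter term is $O(\log^2(2k))$. We also aim to show that the generator terms sum to $O(m\log^2(2k))$, using the fact that a typical word involves at most $m$ distinct generators.

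\textbf{Letter resampling.} Changing one letter $W_r$ changes the position $x=W(Z)$ to $x'=xa$ for some $a$, namely $x'=u z_{i}^{\varepsilon}z_{i'}^{-\varepsilon'}\cdots$ with the prefix and suffix fixed. We compare $q_{Z,m}(x)$ with $q_{Z,m}(x')$. The natural tool is a local comparison of the form
\[
q_{Z,m}(x)\geqslant \frac{1}{(2k)^{2}}\,q_{Z,m}\bigl(x'\bigr)\cdot(\text{correction}),
\]
which is obtained by mapping words reaching $x'$ to words reaching $x$. This map changes the $r$th letter, but the position $r$ is not intrinsic to the endpoint. I would therefore instead use the following symmetrisation. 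Conditional on everything except $W_r$, the law of the endpoint is uniform over $2k$ values $y_1,\dots,y_{2k}$, and
\[
\log q(y_j)-\log q(y_{j'})
\]
is what must be controlled in $L^2$. We bound $\E[(\log q(x)-\log q(x'))^2]$ by splitting into the event that $q(x)/q(x')\in[(2k)^{-C},(2k)^{C}]$, which contributes $O(\log^2 k)$, and its complement. On the complement we use the elementary fact that for any probability vector $p$ and a random element $X\sim p$, $\E[\log^2(1/p(X));\,p(X)\leqslant\delta]\leqslant C\delta^{1/2}|\text{support}|$-type bounds. The size of the support is at most $(2k)^m$, and this is absorbed by choosing the exponent $C$ proportional to $m$. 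That choice would cost a factor $m^2$, however, so this is exactly where care is needed.

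\textbf{Generator resampling and the main obstacle.} Resampling $Z_i$ only affects $\Info_m$ substantially when $W$ uses $i$, and this happens with probability about $m/k$ for each $i$. Given that $W$ uses $i$, we would like the change to be $O(\log^2 k)$ in mean square, which would give a total of $O(m\log^2k)$. The hard part is that resampling $Z_i$ also changes the whole measure $q_{Z,m}$, not just the point. Even for $i$ not used by $W$, the function $q_{Z,m}$ evaluated at a fixed $x$ changes. Controlling these global changes is the main obstacle. I would handle it with the regime $m\leqslant k/4096$: with high probability the word $W$ has no letter repeated too often, so most of the mass of $q_{Z,m}(x)$ is carried by words that are "generic" relative to $W$. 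Each $Z_i$ appears in only a $O(m/k)$ fraction of the relevant words, so $\log q$ changes by $O(m/k)$ typically. Squaring and summing over the $k$ generators then gives $O(m^2/k)=O(m)$. The rare atypical configurations would be handled by a crude bound $\Info_m\leqslant m\log(2k)+\log n$ combined with tail estimates, in which $\log n$ must not appear; removing that dependence is the delicate step. I would first try to replace $\Info_m$ by the truncation $\min(\Info_m,\,m\log(2k))$, noting that $\Info_m\leqslant m\log(2k)$ always holds, because $q(x)\geqslant(2k)^{-m}$ for any reached $x$. This yields an $n$-free bound.
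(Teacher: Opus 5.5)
Your proposal has a genuine gap. The two estimates the argument rests on are never established: that each letter term $\E[(\Info_m-\Info_m^{(r)})^2]$ is $O(\log^2(2k))$, and that the generators actually used by $W$ contribute $O(m\log^2(2k))$ in total. The obstruction is your choice of coordinates $(Z,W)$ itself.

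With $Z$ fixed, changing the letter at position $r$ from $(i,\varepsilon)$ to $(i',\varepsilon')$ replaces $W(Z)$ by $W(Z)\,s^{-1}Z_i^{-\varepsilon}Z_{i'}^{\varepsilon'}s$, where $s=W_{(r,m]}(Z)$ is the value of the whole suffix. Resampling a generator used by $W$ moves the endpoint in the same suffix-dependent way. Symmetrically, you can write either change as a left translation involving the whole prefix. There is no pointwise comparison between $q_{Z,m}(x)$ and $q_{Z,m}(xg)$. What can be controlled is the information content relative to a conditional law. That requires conditioning on a class of words on which the translation is the same, so that the conditional law moves along with the point. In your coordinates such a class must fix the suffix or the prefix. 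This costs about $\min(r,m-r)\log(2k)$ in $\log(1/\Pp(A=a))$, and summing gives a bound of order $m^3\log^2(2k)$.

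Your two fallbacks do not close this. Splitting on whether $q(x)/q(x')\in[(2k)^{-C},(2k)^{C}]$ leaves the probability of the complement uncontrolled. The truncation $\Info_m\leqslant m\log(2k)$ gives only the trivial bound $\Var(\Info_m)\leqslant m^2\log^2(2k)$, a factor $m$ too large.

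The missing idea is a change of coordinates $Z=\Psi_W(B,U)$. Keep $Z_i=B_i$ for indices that are not singletons of $W$. At each singleton position $j$, choose the generator so that the prefix product $W_{(0,j]}(Z)$ equals a fresh uniform element $U_j$. Then $Z$ is still uniform on $G^k$ and independent of the word, even after one coordinate of $(B,U,W)$ is resampled, so you can apply Efron--Stein in the coordinates $(B,U,W)$.

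In these coordinates the generator at the next singleton position absorbs any perturbation. So resampling $U_j$, $B_i$ or $W_j$ changes partial products only on the few blocks between consecutive singleton positions that contain the affected positions. Only a change in the final block can move the endpoint, and then only by a right translation depending on that block's letters and the generators. Since $m\leqslant k/4096$, almost every position is a singleton position. The total length $L$ of these blocks therefore satisfies $\E[L^2]\leqslant C|T|^4$, where $T$ is the set of positions carrying the affected indices.

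Now condition on a list $A$ recording these blocks and all occurrences of the affected indices. The conditional laws before and after the change then differ by one and the same right translation. The change in information content is thus at most of order $|A|\log(2k)$, and $\E[|A|^2;E]\leqslant C\,\Pp(E)$.

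Your treatment of indices absent from $W$ is the one part that works in your coordinates, but you should replace the heuristic with an exact decomposition. Write $q_{z,m}=\alpha\nu_z+(1-\alpha)\nu'_z$, with $\alpha=(1-1/k)^m$ and $\nu_z$ the law given that $i$ does not occur; $\nu_z$ does not depend on $z_i$. This gives a mean-square change of at most $12m/k$ per index. That is of order $m/k$, not $(m/k)^2$, because of rare large upward deviations of $\log q$, but it still sums to $O(m)$.
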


Since $\E[\Info_m\mid Z]=\Ent(q_{Z,m})$, by the law of total variance we have
\[
\Var(\Info_m)=\E\bigl[\Varent(q_{Z,m})\bigr]+\Var\bigl(\Ent(q_{Z,m})\bigr).
\]
Thus Theorem~\ref{thm:information} bounds both the varentropy of the walk for a typical choice of generators and the fluctuations of its entropy from one choice of generators to another.
The first is what entropic concentration requires, and the second is what allows the cutoff time to be deterministic.
Salez \cite[Theorem~6]{S24} proved a bound of the same order $t\log^2(2k)$ on the varentropy at the mixing time $t$ for random Cayley graphs of abelian groups, which are non-negatively curved.
In contrast to this bound and to the estimate of Hermon, Huang, Pedrotti, and Salez \cite[Theorem~1]{HHPS26}, Theorem~\ref{thm:information} requires neither non-negative curvature nor invariance under conjugation.

By Theorem~\ref{thm:information}, the information content of the walk after $m$ steps is concentrated around $h_m$ with a standard deviation of order at most $\sqrt m\,\log(2k)$.
By the theorem of Alon and Roichman (Lemma~\ref{lem:spectrum}), with high probability, the $L_2$ distance of the walk from $\pi$ is multiplied by at most $(e^3\lceil\log n\rceil/k)^{1/2}$ at each step.
Hence, once the information content typically exceeds $\log n-3\omega$, where $\sqrt m\,\log(2k)=o(\omega)$, a further $O\bigl(\omega/\log(k/\log n)\bigr)$ steps suffice for the walk to mix.
This leads to the following bound on the cutoff window.
For $\omega>0$, let
\[
t_\omega=\min\bigl\{m\in\Z_{\geqslant0}:h_m\geqslant\log n-2\omega\bigr\},
\]
so that $t_*=t_{(\log n)^{2/3}}$.

\begin{theorem}\label{thm:window}
Let $G$ range over a sequence of finite groups of order $n\to\infty$, and let $k=k(G)\geqslant2^{14}\log n$.
For each $G$, let $Z=(Z_1,\dots,Z_k)$, where $Z_1,\dots,Z_k$ are chosen independently and uniformly at random from $G$, and let $\omega=\omega(G)>0$ satisfy
\[
\sqrt{t_\omega}\,\log(2k)=o(\omega)\qquad\text{and}\qquad\omega=o(t_\omega).
\]
Then, for both the undirected and the directed walks, in discrete and continuous time,
\[
\E\,d_{Z}(t_\omega-1)\to1\qquad\text{and}\qquad\E\,d_{Z}\Bigl(t_\omega+\frac{20\,\omega}{\log(k/\log n)}\Bigr)\to0.
\]
\end{theorem}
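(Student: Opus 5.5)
We prove Theorem~\ref{thm:window} by entropic concentration: Theorem~\ref{thm:information} gives concentration of the information content, and the Alon--Roichman spectral bound (Lemma~\ref{lem:spectrum}) converts near-maximal information into mixing. First we check that $t_\omega\leqslant k/4096$, so that Theorem~\ref{thm:information} applies at $m=t_\omega$ and $m=t_\omega-1$; this should follow from the upper bound on $t_*$ in Lemma~\ref{lem:centre-bounds} together with $k\geqslant2^{14}\log n$. We treat discrete time first, then transfer to continuous time: the Poisson number of steps at time $t$ concentrates in a window of size $O(\sqrt t)$, and $\sqrt{t_\omega}=o(\omega/\log(2k))$ is smaller than both the $20\omega/\log(k/\log n)$ slack and the resolution $\omega/\log(2k)$ at which $h_m$ can move by $o(\omega)$ (each step raises entropy by at most $\log(2k)$).

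\emph{Lower bound.} Put $m=t_\omega-1$. By the definition of $t_\omega$ we have $h_m<\log n-2\omega$. By Chebyshev and Theorem~\ref{thm:information},
\[
\Pp\bigl(\Info_m\geqslant\log n-\omega\bigr)\leqslant\frac{Cm\log^2(2k)}{\omega^2}=o(1).
\]
Let $A_Z=\{x:q_{Z,m}(x)>e^{-(\log n-\omega)}\}$. Then $|A_Z|\leqslant ne^{-\omega}$, so $\pi(A_Z)\leqslant e^{-\omega}$. Also $q_{Z,m}(A_Z)=\Pp(\Info_m<\log n-\omega\mid Z)$, and its expectation tends to $1$. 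Hence $\E\,d_Z(m)\geqslant\E\,q_{Z,m}(A_Z)-e^{-\omega}\to1$; note that $\omega\to\infty$ because $\omega\gg\log(2k)$. In continuous time we use $p_{Z,t}$ with $t=t_\omega-1-K\sqrt{t_\omega}$ and a union bound over the Poisson bulk, since $h$ is nondecreasing up to negligible error.

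\emph{Upper bound.} Put $m=t_\omega$, so $h_m\geqslant\log n-2\omega$. The information content is at most $\log n+\omega'$ except with small probability: this needs care, because large $\Info$ means small mass, which is harmless anyway. Chebyshev gives $\Info_m\geqslant\log n-3\omega$ with probability $1-o(1)$. We split $q_{Z,m}=\nu_1+\nu_2$, where $\nu_1$ is the restriction to $\{x:q_{Z,m}(x)\leqslant e^{3\omega}/n\}$ and $\nu_2$ has expected mass $o(1)$. Then $\|n\nu_1-1\|_2^2\leqslant n\sum_x\nu_1(x)^2\leqslant e^{3\omega}$. Applying $s=20\omega/\log(k/\log n)$ further steps and using Lemma~\ref{lem:spectrum} on its high-probability event (which covers both undirected and directed walks), the $L_2$ distance of $\nu_1P_Z^s$ from $\nu_1(G)\pi$ is at most $e^{3\omega/2}(e^3\lceil\log n\rceil/k)^{s/2}$. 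Since $k\geqslant2^{14}\log n$, we have $\log(k/(e^3\lceil\log n\rceil))\geqslant c\log(k/\log n)$ for some $c$ making $10c>3/2$, so this bound is $o(1)$. Total variation is bounded by half the $L_2$ norm, and the $\nu_2$ part contributes at most its mass, so $\E\,d_Z(t_\omega+s)\to0$. The condition $\omega=o(t_\omega)$ only ensures that the window is of smaller order; it also helps absorb rounding of $s$ and the Poisson shift in continuous time.

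The main obstacle is the constant bookkeeping in the upper bound: checking that $(e^3\lceil\log n\rceil/k)^{10\omega/\log(k/\log n)}$ beats $e^{3\omega/2}$ uniformly when $k/\log n$ is only a large constant. There $\log(k/(e^3\log n))$ is comparable to, but smaller than, $\log(k/\log n)$, and the threshold $2^{14}$ must be used.
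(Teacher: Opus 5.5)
\paragraph{The gap: the continuous-time lower bound is proved at the wrong time.} Your discrete-time argument is the paper's: Chebyshev with Theorem~\ref{thm:information} at $t_\omega-1$ and $t_\omega$, truncation of the density at $e^{3\omega}/n$, and $L_2$ contraction by $\lambda_*(Z)$ on the event of Lemma~\ref{lem:spectrum}.

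The step that fails as written is the continuous-time lower bound. You evaluate at $t=t_\omega-1-K\sqrt{t_\omega}$, so that the Poisson bulk lies below $t_\omega-1$ and monotonicity of $h$ applies. That proves $\E\,d_Z(t_\omega-1-K\sqrt{t_\omega})\to1$. Since $d_Z$ is non-increasing, this says nothing about $d_Z(t_\omega-1)$, which is what the theorem asserts.

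\paragraph{How to repair it.} Centre the bulk at $t_\omega-1$ instead. You then need exactly the ingredient from your first paragraph.
\begin{itemize}
\item For $|m-(t_\omega-1)|\leqslant K\sqrt{t_\omega}$, Lemma~\ref{lem:entropy-increments} gives $h_m\leqslant h_{t_\omega-1}+K\sqrt{t_\omega}\log(2k)<\log n-2\omega+o(\omega)$.
\item Also $m\leqslant2t_\omega\leqslant k/4096$, so Chebyshev gives $\Pp(\Info_m\geqslant\log n-\omega)=o(1)$ uniformly over the bulk.
\item Let $A_Z$ be the union over the bulk of the sets $\{x:q_{Z,m}(x)>e^{\omega}/n\}$. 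Then $\pi(A_Z)\leqslant(2K\sqrt{t_\omega}+1)e^{-\omega}\to0$, because $\sqrt{t_\omega}=o(\omega)$.
\item Moreover $\E\,p_{Z,t_\omega-1}(A_Z)\geqslant1-K^{-2}-o(1)$. Now let $K\to\infty$.
\end{itemize}

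\paragraph{Comparison with the paper.} With this repair, your continuous-time route differs from the paper's.

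The paper proves Lemma~\ref{lem:continuous}, which gives concentration of $-\log p_{Z,t}(X_t)$ itself. It does this via Lemma~\ref{lem:mixture}(i) applied to $p_{z,t}\geqslant\Pp(N=m)\,q_{z,m}$, plus a Stirling estimate. Both bounds then run verbatim at exactly $t_\omega-1$ and $t_\omega$. In the upper bound, the extra time is a Poisson$(2j)$ number of steps from $p_{z,t_\omega}$, with $j=\lceil8\omega/\log(k/\log n)\rceil$.

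Your reduction to discrete time avoids Lemma~\ref{lem:continuous}. The cost is the union over $O(\sqrt{t_\omega})$ high-density sets in the lower bound. In the upper bound, you must run the discrete argument with somewhat fewer than $20\omega/\log(k/\log n)$ steps and use monotonicity of $\|q_{z,m}-\pi\|_{\TV}$ in $m$. Both are fine.

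\paragraph{Minor points.} The constants you call the main obstacle are not delicate. Since $\log(k/\log n)\geqslant14\log2>2(3+\log2)$, you have $\log\bigl(k/(e^3\lceil\log n\rceil)\bigr)\geqslant\frac12\log(k/\log n)$. This is far more than the $c>3/20$ you need, and it leaves room for the Poisson shift.

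Finally, $t_\omega\leqslant k/4096$ comes from Lemma~\ref{lem:omega-bounds}: $t_\omega\leqslant\lceil\log n\rceil\leqslant k/8192$ once $\omega\geqslant1$. It does not come from Lemma~\ref{lem:centre-bounds}, which assumes $k/\log n\to\infty$ and concerns $t_*$, not $t_\omega$.
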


By Lemma~\ref{lem:omega-bounds}, we have $t_\omega\leqslant\lceil\log n\rceil$, and hence the first condition in Theorem~\ref{thm:window} holds whenever $\sqrt{\log n}\,\log(2k)=o(\omega)$.
For example, if $k=O(\log n)$, then both conditions hold whenever $\sqrt{\log n}\,\log\log n=o(\omega)$ and $\omega\log\log n=o(\log n)$, and then the cutoff window is of order at most $\omega$.
For comparison, Hermon and Olesker-Taylor \cite[Theorem~2.4 and Remark~2.5]{HOT19b} showed that, under some conditions on an abelian group, the window of the continuous-time walk is of order $\sqrt{\log n}$ when $k$ is of order $\log n$.
When $\log k\leqslant(\log\log n)^2$, we deduce Theorem~\ref{thm:main} from Theorem~\ref{thm:window} with $\omega=(\log n)^{2/3}$, and when $\log k>(\log\log n)^2$, the two classical bounds agree to leading order and already give cutoff.

The main idea in the proof of Theorem~\ref{thm:information} is a change of coordinates.
We use the Efron--Stein inequality \cite{ES81,Ste86}, which requires an estimate of how much the information content changes when a single generator or letter is replaced with an independent copy.
In a non-abelian group, this change is hard to estimate directly: replacing a generator moves the value of a word by an element that depends on the whole word, so different words are moved differently.
When $m$ is small compared with $k$, as it is for $m$ of order $t_*$, most letters of a word chosen uniformly at random carry an index that occurs nowhere else in the word.
The generators of these indices are independent and uniformly distributed, and each of them is used only once, so the positions of the walk just after these letters are also independent and uniformly distributed.
We use these positions, rather than the generators, as coordinates for the Efron--Stein inequality.
In the new coordinates, replacing a single coordinate changes the products of letters only on a few short intervals, each of which ends at such a position or at the end of the word.
Only a change on an interval ending at the end of the word can move the walk, and then it moves the walk by a right translation.
After conditioning on the letters in these intervals and on the occurrences of a few indices, the walk and its conditional law are moved by the same right translation, which does not change the information content.
The conditioning changes the information content by at most of order $\log(2k)$ times the number of letters and occurrences involved, and the second moment of this number is bounded, which explains the factor $\log^2(2k)$ in Theorem~\ref{thm:information}.

The paper is organised as follows.
In Section~\ref{sec:coordinates}, we construct the new coordinates and show that, in a typical word, the indices occurring only once are rarely far apart.
In Section~\ref{sec:change}, we bound the effect of replacing a single coordinate, and in Section~\ref{sec:variance} we combine these bounds with the Efron--Stein inequality to prove Theorem~\ref{thm:information}.
Finally, in Section~\ref{sec:cutoff}, we deduce Theorems~\ref{thm:main} and~\ref{thm:window} from Theorem~\ref{thm:information} and the theorem of Alon and Roichman.

\section{Words and independent coordinates}\label{sec:coordinates}

In this section, we construct the coordinates described in the introduction, and we show that a typical word is well behaved.
The coordinates are designed for the Efron--Stein inequality, which bounds the variance of a function of independent random variables by the effect of replacing each of them with an independent copy.
A single generator affects the walk at every occurrence of its index in the word, whereas a single coordinate will affect the products of letters only on a few short intervals.

Let $w=(w_1,\dots,w_m)\in\Words_m$ with letters $w_r=(i_r,\varepsilon_r)$.
For $i\in[k]$, the \textbf{multiplicity} $N_i(w)$ of $i$ in $w$ is the number of positions $r$ with $i_r=i$.
The \textit{occurrences} of $i$ in $w$ are the pairs $(r,\varepsilon_r)$ with $i_r=i$.
An index $i$ is a \textbf{singleton} of $w$ if $N_i(w)=1$.
A position $r$ is a \textbf{singleton position} of $w$ if $i_r$ is a singleton of $w$, and we write $S(w)$ for the set of singleton positions of $w$.
An \textit{interval} is a set $(a,c]=\{a+1,\dots,c\}$ of consecutive positions, where $0\leqslant a\leqslant c\leqslant m$.
The \textit{restriction} of $w$ to an interval $I$ is the word $w_I$ formed by the letters of $w$ at the positions in $I$.
For example, $w_{(a,c]}=(w_{a+1},\dots,w_c)$, and $w_{(a,a]}$ is the empty word.

Let $B=(B_1,\dots,B_k)$ and $U=(U_1,\dots,U_m)$, where $B_1,\dots,B_k,U_1,\dots,U_m$ are chosen independently and uniformly at random from $G$, and let $U_0=\mathrm{id}$.
Let $w\in\Words_m$ with $S(w)=\{j_1<\dots<j_s\}$, and let $j_0=0$.
We define $\Psi_w(B,U)$ to be the tuple $z\in G^k$ such that $z_i=B_i$ whenever $i$ is not a singleton of $w$, and
\begin{equation}\label{eq:prefix}
w_{(0,j_\ell]}(z)=U_{j_\ell}\qquad\text{for }1\leqslant\ell\leqslant s.
\end{equation}
To see that this tuple exists and is unique, note that the indices at the positions strictly between $j_{\ell-1}$ and $j_\ell$ are not singletons, so their generators are given by $B$.
Hence \eqref{eq:prefix} determines the generators at the singleton positions one at a time: if $1\leqslant\ell\leqslant s$ and $w_{j_\ell}=(i,\varepsilon)$, then
\begin{equation}\label{eq:prefix-coordinates}
z_i^{\varepsilon}=\bigl(U_{j_{\ell-1}}\,w_{(j_{\ell-1},\,j_\ell-1]}(B)\bigr)^{-1}U_{j_\ell}.
\end{equation}
In other words, $\Psi_w$ takes from $B$ the generators of the indices that are not singletons, and chooses each remaining generator so that the product of the letters up to its position is prescribed.
With this choice, replacing a single coordinate has a local effect.
For instance, by \eqref{eq:prefix-coordinates}, the coordinate $U_{j_\ell}$ enters only the generators at the singleton positions $j_\ell$ and, when $\ell<s$, $j_{\ell+1}$.
The coordinates $U_r$ with $r\notin S(w)$, and $B_i$ with $i$ a singleton of $w$, are not used.
If $S(w)=\varnothing$, then $\Psi_w(B,U)=B$.

Let $W$ be chosen uniformly at random from $\Words_m$, independently of $(B,U)$.
The coordinates of $(B,U,W)$ are $B_1,\dots,B_k$, $U_1,\dots,U_m$, and the letters $W_1,\dots,W_m$ of $W$, and they are independent.
The first lemma shows that these coordinates produce a uniformly distributed tuple of generators, independent of the word, even after a single coordinate has been replaced with an independent copy.

\begin{lemma}\label{lem:prefix-law}
Let $(B',U',W')$ be obtained from $(B,U,W)$ by replacing one coordinate with an independent copy.
Then each of $\Psi_W(B,U)$ and $\Psi_{W'}(B',U')$ is uniformly distributed on $G^k$ and independent of $(W,W')$.
\end{lemma}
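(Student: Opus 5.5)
The core claim is a statement for a fixed word: for every fixed $w\in\Words_m$, the map $(B,U)\mapsto\Psi_w(B,U)$ pushes the uniform law on $G^k\times G^m$ forward to the uniform law on $G^k$. Granting this, the lemma follows by conditioning. Since $W$ is independent of $(B,U)$, the conditional law of $\Psi_W(B,U)$ given $W=w$ is the law of $\Psi_w(B,U)$, which is uniform and does not depend on $w$. For the primed tuple we condition on the pair $(W,W')$. The replaced coordinate is either one of the $B_i$, one of the $U_r$, or one of the letters $W_r$. In each case $(B',U')$ is again a vector of independent uniform elements of $G$, and it is independent of $W'$. Conditioning on $(W,W')$ only brings extra information when the replaced coordinate is a letter, and then $(B',U')=(B,U)$ is independent of $(W,W')$. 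When the replaced coordinate lies in $(B,U)$, we have $W'=W$, and $(B',U')$ is independent of $W$. In all cases the conditional law of $(B',U')$ given $(W,W')=(w,w')$ is uniform, so $\Psi_{W'}(B',U')=\Psi_{w'}(B',U')$ is conditionally uniform. Its conditional law is therefore constant in $(w,w')$, which is exactly uniformity together with independence from $(W,W')$.

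To prove the fixed-word claim, write $S(w)=\{j_1<\dots<j_s\}$ and let $i^{(\ell)}$ be the singleton index at position $j_\ell$, with sign $\varepsilon^{(\ell)}$.

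The non-singleton coordinates $z_i=B_i$ for $i\notin\{i^{(1)},\dots,i^{(s)}\}$ are independent and uniform. They are also independent of $(U_{j_1},\dots,U_{j_s})$, and they are functions of $B$ restricted to the non-singleton indices.

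Condition on these $B_i$. By \eqref{eq:prefix-coordinates}, the singleton generators are then obtained sequentially:
\[
z_{i^{(\ell)}}^{\varepsilon^{(\ell)}}=g_{\ell}^{-1}U_{j_\ell},\qquad g_\ell=U_{j_{\ell-1}}\,w_{(j_{\ell-1},j_\ell-1]}(B).
\]
Here $g_\ell$ is determined by the conditioned $B$'s and by $U_{j_{\ell-1}}$ (with $U_{j_0}=U_0=\mathrm{id}$).

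Equivalently, the map $(U_{j_1},\dots,U_{j_s})\mapsto(z_{i^{(1)}},\dots,z_{i^{(s)}})$ is a bijection of $G^s$. Its inverse is
\[
U_{j_\ell}=g_\ell\,z_{i^{(\ell)}}^{\varepsilon^{(\ell)}}.
\]
This inverse is computed recursively, using that inversion $x\mapsto x^{-1}$ is a bijection when $\varepsilon^{(\ell)}=-1$. A bijection pushes the uniform measure forward to the uniform measure. The singleton generators are therefore uniform on $G^s$ conditionally on the non-singleton generators, so the whole tuple $z$ is uniform on $G^k$.

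An alternative to the bijection is induction on $\ell$. Given $z_{i^{(1)}},\dots,z_{i^{(\ell-1)}}$, the quantity $g_\ell$ is fixed, while $U_{j_\ell}$ is still uniform and independent of it. Left translation preserves the uniform measure, so $z_{i^{(\ell)}}^{\varepsilon^{(\ell)}}$ is uniform.

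The unused coordinates (the $U_r$ with $r\notin S(w)$, and the $B_i$ with $i$ a singleton) play no role in this argument.

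The only real subtlety is the case analysis for the primed tuple. One has to check that replacing a letter changes $W'$ but keeps $(B',U')=(B,U)$ independent of the pair $(W,W')$, and I expect this to be the step needing the most care. Everything else is routine.
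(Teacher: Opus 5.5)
Your proposal is correct and follows essentially the same route as the paper: for a fixed word, a bijection between $(U_{j_1},\dots,U_{j_s})$ and the singleton generators given the non-singleton $B_i$, followed by conditioning on $(W,W')$ using that each of $(B,U)$ and $(B',U')$ is uniform and independent of $(W,W')$. The one small point is that for the unprimed tuple you condition only on $W$, whereas the lemma asks for independence from $(W,W')$. Your own case analysis closes this at once, since it shows that $(B,U)$ is independent of $(W,W')$ whichever coordinate is replaced.
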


\begin{proof}
We first show that $\Psi_w(B,U)$ is uniformly distributed on $G^k$ for every $w\in\Words_m$.
Let $z=\Psi_w(B,U)$ and $S(w)=\{j_1<\dots<j_s\}$.
The coordinates of $z$ at the indices that are not singletons of $w$ are the corresponding coordinates of $B$, which are independent and uniformly distributed, and we condition on them.
By \eqref{eq:prefix-coordinates}, the generators at the singletons of $w$ are then determined in turn by $U_{j_1},\dots,U_{j_s}$, and conversely $U_{j_\ell}=w_{(0,j_\ell]}(z)$ for each $\ell$, by \eqref{eq:prefix}.
This gives a bijection between $(U_{j_1},\dots,U_{j_s})$ and the generators at the singletons of $w$.
Given the remaining generators, the generators at the singletons are therefore independent and uniformly distributed, and hence $z$ is uniformly distributed on $G^k$.

Now each of $(B,U)$ and $(B',U')$ has the law of $(B,U)$ and is independent of $(W,W')$.
Hence, conditional on $(W,W')=(w,w')$, the tuples $\Psi_W(B,U)$ and $\Psi_{W'}(B',U')$ have the laws of $\Psi_w(B,U)$ and $\Psi_{w'}(B,U)$, respectively, and both laws are uniform on $G^k$.
These conditional laws do not depend on $(w,w')$, and the lemma follows.
\end{proof}

The number and lengths of the intervals affected by a replacement depend on the multiplicities of the indices and on the gaps between singleton positions.
The \textit{blocks} of a set $R$ of positions are the intervals $(r,r']$ such that $r<r'$ are consecutive elements of $R\cup\{0,m\}$.
For example, if $S(w)=\{j_1<\dots<j_s\}$ and $j_0=0$, then the blocks of $S(w)$ are $(j_{\ell-1},j_\ell]$ for $1\leqslant\ell\leqslant s$, together with $(j_s,m]$ if $j_s<m$.
The next lemma shows that, in a typical word, the blocks containing a few given positions are short, even when the letters at these positions are prescribed.

\begin{lemma}\label{lem:blocks}
Let $k$ and $m$ be integers with $1\leqslant m\leqslant k/4096$, let $T\subseteq[m]$ be non-empty, and let $K\subseteq[k]$ with $|K|\geqslant k/2$.
Let $W$ be a random element of $\Words_m$ whose letters at the positions in $T$ are prescribed, and whose letters at the remaining positions are independent, with indices uniformly distributed on $K$.
Let $L$ be the sum of $|I|$ over the blocks $I$ of $S(W)\setminus T$ that contain a position of $T$.
Then $\E[L^2]\leqslant C|T|^4$.
\end{lemma}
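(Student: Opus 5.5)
The plan is to bound each block containing a position of $T$ separately, reduce to a tail estimate for the gaps between non-$T$ singleton positions, and then sum. For $t\in T$, let $I_t$ be the block of $S(W)\setminus T$ that contains $t$. Every block counted in $L$ equals $I_t$ for at least one $t$, so $L\leqslant\sum_{t\in T}|I_t|$. By Cauchy--Schwarz,
\[
\E[L^2]\leqslant|T|\sum_{t\in T}\E\bigl[|I_t|^2\bigr]\leqslant|T|^2\max_{t\in T}\E\bigl[|I_t|^2\bigr].
\]
It therefore suffices to show that $\E[|I_t|^2]\leqslant C|T|^2$ for each $t\in T$. If $|I_t|>j$, then either the interval of length $\lfloor j/2\rfloor$ immediately to the left of $t$ or the one immediately to the right of $t$, truncated at $0$ and $m$, contains no element of $S(W)\setminus T$. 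Truncation at $0$ or $m$ only helps, since then the block ends there. Such an interval contains at least $j'=\lfloor j/2\rfloor-|T|$ positions outside $T$, and every one of them must be a non-singleton position.

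The key step is the following claim: for any fixed set $J$ of $j'$ positions outside $T$,
\[
\Pp\bigl(\text{no position of }J\text{ is a singleton position}\bigr)\leqslant 2^{-j'}.
\]
To prove it, condition on the letters at all positions outside $J$. They use a set $F$ of at most $m$ indices, and this set includes the prescribed indices at $T$. The indices at positions of $J$ are then independent and uniform on $K$, with $|K|\geqslant k/2$. Let $A\subseteq J$ be the set of positions whose index falls in $F$. Each position lands in $A$ independently with probability at most $2m/k\leqslant 2^{-11}$. Every position of $J\setminus A$ must share its index with another position of $J\setminus A$, so the $j''=|J\setminus A|$ indices at these positions take at most $j''/2$ distinct values. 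A union bound over the set of values and their assignment bounds the probability of this by $\binom{j''}{\lfloor j''/2\rfloor}\bigl(j''/(2|K|)\bigr)^{\lceil j''/2\rceil}\leqslant(Cm/k)^{j''/2}$, using $j''\leqslant m$. Summing over the $2^{j'}$ choices of $A$, each term is at most $\max\bigl(2^{-11|A|},(Cm/k)^{(j'-|A|)/2}\bigr)$. Since $m\leqslant k/4096$, the total is at most $2^{j'}\cdot 2^{-2j'}=2^{-j'}$, with the constants to be checked.

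Given the claim, $\Pp(|I_t|>j)\leqslant 2\cdot 2^{-(\lfloor j/2\rfloor-|T|)}$ for $j\geqslant 2|T|$, and trivially $\Pp(|I_t|>j)\leqslant1$ for smaller $j$. Summing $\sum_j (2j+1)\Pp(|I_t|>j)$ gives $\E[|I_t|^2]\leqslant C|T|^2$, and hence $\E[L^2]\leqslant C|T|^4$. The main obstacle is the claim, specifically making the collision count uniform over the prescribed letters and the rest of the word. Conditioning on everything outside $J$ and splitting according to $A$ is the route I would take first. The only delicate point is getting the constant $4096$ to beat the $2^{j'}$ entropy factors, which is where the condition $m\leqslant k/4096$ enters.
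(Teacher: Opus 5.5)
Your proof is correct and follows essentially the same route as the paper: a tail bound for the event that a fixed set of non-$T$ positions contains no singleton position (conditioning on the letters outside it and separating positions whose index occurs outside from those forced to repeat inside), applied on either side of each $t\in T$, then Cauchy--Schwarz; the paper counts only the non-$T$ positions of the block, via $|I_t|\leqslant|T|+L_t+1$, rather than windows of consecutive positions, but this is cosmetic. One slip to fix: for a given $A$ the probability is bounded by the product $(2m/k)^{|A|}(Cm/k)^{(j'-|A|)/2}\leqslant\max\bigl(2^{-11},(Cm/k)^{1/2}\bigr)^{j'}$, not by the maximum of the two factors (which equals $1$ when $A=\varnothing$), and it is this product that gives $2^{-2j'}$ per term.
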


\begin{proof}
We first bound the probability that a set of positions outside $T$ contains no singleton position.
Let $J\subseteq[m]\setminus T$ be a set of $b\geqslant1$ positions, and condition on the letters of $W$ at the positions outside $J$, among which at most $m$ distinct indices occur.
If no position of $J$ is a singleton position of $W$, then each position of $J$ carries either an index that occurs outside $J$, or an index that occurs only in $J$ and at least twice there.
Hence at least one of the following holds:
\begin{enumerate}
\item[(i)] at least $b/2$ positions of $J$ carry indices occurring outside $J$;
\item[(ii)] more than $b/2$ positions of $J$ carry indices that occur only in $J$, each at least twice.
\end{enumerate}

In case~(i), there are at most $2^b$ choices for the set of such positions.
Given the letters outside $J$, each of these positions independently carries one of at most $m$ prescribed indices with probability at most $m/|K|$, so case~(i) has probability at most $2^b(m/|K|)^{b/2}$.
In case~(ii), each of the indices in question occurs at least twice in $J$, so, reading from left to right, at least $b/4$ positions of $J$ carry an index that has already occurred earlier in $J$.
There are again at most $2^b$ choices for the set of these positions.
Given the earlier letters, each of them carries one of the at most $b$ indices already seen in $J$ with probability at most $b/|K|$, so case~(ii) has probability at most $2^b(b/|K|)^{b/4}$.
Since $b\leqslant m\leqslant k/4096$ and $|K|\geqslant k/2$, we have $m/|K|\leqslant1/2048$ and $b/|K|\leqslant1/2048$, and hence
\begin{equation}\label{eq:no-singleton}
\Pp\bigl(J\cap S(W)=\varnothing\bigr)\leqslant2^b\,2048^{-b/2}+2^b\,2048^{-b/4}\leqslant2\cdot3^{-b}.
\end{equation}

Now fix $j\in T$, let $I_j$ be the block of $S(W)\setminus T$ containing $j$, and let $L_j=|I_j\setminus(T\cup S(W))|$.
Every position of $I_j$ other than its right endpoint lies outside $S(W)\setminus T$, so $|I_j|\leqslant|T|+L_j+1$.
Since $I_j$ is an interval containing $j$, the positions counted by $L_j$ that are smaller than $j$ are the largest positions $r<j$ with $r\notin T$, and those that are larger than $j$ are the smallest positions $r>j$ with $r\notin T$.
Hence, if $L_j\geqslant\ell$, then the $\lceil\ell/2\rceil$ largest positions $r<j$ with $r\notin T$, or the $\lceil\ell/2\rceil$ smallest positions $r>j$ with $r\notin T$, are not singleton positions of $W$.
By \eqref{eq:no-singleton}, it follows that $\Pp(L_j\geqslant\ell)\leqslant4\cdot3^{-\ell/2}$, and hence
\[
\E\bigl[L_j^2\bigr]=\sum_{\ell\geqslant1}(2\ell-1)\,\Pp(L_j\geqslant\ell)\leqslant\sum_{\ell\geqslant1}4(2\ell-1)3^{-\ell/2}\leqslant C.
\]
The blocks of $S(W)\setminus T$ that contain a position of $T$ are among the intervals $I_j$, so $L\leqslant\sum_{j\in T}|I_j|\leqslant|T|(|T|+1)+\sum_{j\in T}L_j$.
By the Cauchy--Schwarz inequality, it follows that
\[
L^2\leqslant2|T|^2(|T|+1)^2+2|T|\sum_{j\in T}L_j^2,
\]
and taking expectations, we obtain $\E[L^2]\leqslant C|T|^4$, as required.
\end{proof}

\section{Changing a single coordinate}\label{sec:change}

In this section, we compare the information content of the walk before and after a single coordinate of $(B,U,W)$ is replaced with an independent copy.
We make the comparison on the fibres of a suitable function of the pair of words, chosen so that on each fibre the conditional laws of the walk before and after the replacement differ by a right translation.

We begin with two estimates for the logarithm of a density.
If $E$ is an event with $\Pp(E)=\alpha\in(0,1)$, then the law of the walk at a given time is $\alpha\nu+(1-\alpha)\nu'$, where $\nu$ and $\nu'$ are its conditional laws given $E$ and given the complement of $E$.
The following lemma compares this mixture with $\nu$.
Part~(i) is a variant of an estimate of Eberhard and Varj\'u \cite[Lemma~2.2]{EV21}; see also \cite[proof of Proposition~3.14]{HR25}.

\begin{lemma}\label{lem:mixture}
Let $\mu$ and $\nu$ be probability measures on a finite set, let $X$ be a random variable with law $\nu$, and let $\alpha\in(0,1]$.
\begin{enumerate}
\item[(i)] If $\mu\geqslant\alpha\nu$, then $\E\bigl[\log^2\bigl(\mu(X)/\nu(X)\bigr)\bigr]\leqslant\log^2(1/\alpha)+2$.
\item[(ii)] If $\alpha\geqslant1/2$ and $\mu=\alpha\nu+(1-\alpha)\nu'$ for some probability measure $\nu'$, then $\E\bigl[\log^2\bigl(\mu(X)/\nu(X)\bigr)\bigr]\leqslant3(1-\alpha)$.
\end{enumerate}
\end{lemma}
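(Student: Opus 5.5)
The plan is to reduce both parts to elementary inequalities for the likelihood ratio $R=\mu(X)/\nu(X)$, using only that $R$ has mean at most one under $\nu$. Since $X$ has law $\nu$, it lies almost surely in the support of $\nu$, where $\nu(X)>0$. Hence
\[
\E[R]=\sum_{x:\,\nu(x)>0}\mu(x)\leqslant1 .
\]
The same bound holds for any ratio $\nu'(X)/\nu(X)$ with $\nu'$ a probability measure. I would split each expectation according to whether $R\leqslant1$ or $R>1$. The event $R\leqslant1$ is controlled by the lower bound $R\geqslant\alpha$. The event $R>1$ is controlled by $\E[R]\leqslant1$.

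For part~(i), the hypothesis $\mu\geqslant\alpha\nu$ gives $R\geqslant\alpha$.
\begin{itemize}
\item On $\{R\leqslant1\}$ we have $\log^2R\leqslant\log^2(1/\alpha)$.
\item On $\{R>1\}$ I would use the elementary bound $\log^2r\leqslant r$ for $r\geqslant1$. To check it, note that $r^{-1}\log^2r$ is maximised at $r=e^2$, where it equals $4/e^2<1$.
\end{itemize}
Hence
\[
\E\bigl[\log^2R\bigr]\leqslant\log^2(1/\alpha)+\E[R]\leqslant\log^2(1/\alpha)+1,
\]
which is stronger than the claimed bound.

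For part~(ii), write $\rho=\nu'(X)/\nu(X)\geqslant0$. Then
\[
R=\alpha+(1-\alpha)\rho,\qquad R-1=(1-\alpha)(\rho-1),\qquad R\geqslant\alpha\geqslant\tfrac12 .
\]
I would use the inequality $|\log r|\leqslant|r-1|/\sqrt r$ for all $r>0$. It follows from comparing derivatives at $r=1$, or from the substitution $r=e^{2u}$, which reduces it to $|u|\leqslant|\sinh u|$. This gives $\log^2R\leqslant(R-1)^2/R$, which I would bound in two cases.
\begin{itemize}
\item On $\{R<1\}$, we have $1-R\leqslant1-\alpha$ and $1/R\leqslant2$. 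So $(1-R)^2/R\leqslant2(1-\alpha)^2\leqslant1-\alpha$, using $1-\alpha\leqslant1/2$.
\item On $\{R\geqslant1\}$, we have $(R-1)^2/R\leqslant R-1\leqslant(1-\alpha)\rho$.
\end{itemize}
Taking expectations and using $\E[\rho]\leqslant1$ gives
\[
\E\bigl[\log^2R\bigr]\leqslant(1-\alpha)+(1-\alpha)\E[\rho]\leqslant2(1-\alpha)\leqslant3(1-\alpha).
\]

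I do not expect a serious obstacle here; the only real work is choosing the pointwise inequalities for $\log^2$. The step needing most care is the large-ratio regime $R>1$ in part~(ii). There a bound quadratic in $R-1$ would require a second moment of $\rho$, which we do not control. The point is to use $(R-1)^2/R\leqslant R-1$, which is linear in $\rho$, so that only $\E[\rho]\leqslant1$ is needed.
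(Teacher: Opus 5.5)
Your proof is correct and follows essentially the paper's route: you split according to whether $R=\mu(X)/\nu(X)$ is below or above $1$, control the lower side by the pointwise bound $R\geqslant\alpha$, and control the upper side using only that likelihood ratios have mean at most one under $\nu$. In (ii), your bound $\log^2R\leqslant R-1$ on $\{R\geqslant1\}$, combined with $R-1\leqslant(1-\alpha)\rho$, is exactly the paper's use of $\log^2u\leqslant u-1$ together with $(\mu-\nu)_+=(1-\alpha)(\nu'-\nu)_+$.

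The only differences are small: in (i) you use the pointwise inequality $\log^2r\leqslant r$ for $r\geqslant1$ instead of the paper's Markov tail bound $\Pp(\log R>s)\leqslant e^{-s}$ integrated against $2s\,\mathrm{d}s$, and in (ii) you bound the lower side by $(1-R)^2/R\leqslant2(1-\alpha)^2$ rather than by $\log^2\alpha\leqslant4(1-\alpha)^2$. Together these give the slightly sharper constants $\log^2(1/\alpha)+1$ and $2(1-\alpha)$.
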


\begin{proof}
Let $Y=\log\bigl(\mu(X)/\nu(X)\bigr)$, and let $Y_+$ and $Y_-$ be its positive and negative parts, so that $Y^2=Y_+^2+Y_-^2$.
Since $\E\bigl[\mu(X)/\nu(X)\bigr]=\sum_{x:\nu(x)>0}\mu(x)\leqslant1$, by Markov's inequality we have $\Pp(Y>s)\leqslant e^{-s}$ for $s\geqslant0$, and hence
\[
\E\bigl[Y_+^2\bigr]=\int_0^\infty2s\,\Pp(Y>s)\,\mathrm{d}s\leqslant\int_0^\infty2se^{-s}\,\mathrm{d}s=2.
\]
In case~(i), we have $Y_-\leqslant\log(1/\alpha)$ by hypothesis, and (i) follows.

In case~(ii), we have $Y\geqslant\log\alpha\geqslant-2(1-\alpha)$ since $\alpha\geqslant1/2$, and hence $\E\bigl[Y_-^2\bigr]\leqslant4(1-\alpha)^2\leqslant2(1-\alpha)$.
Using the inequality $\log^2u\leqslant u-1$ for $u\geqslant1$, we also have
\[
\E\bigl[Y_+^2\bigr]\leqslant\sum_{x:\nu(x)>0}\nu(x)\Bigl(\frac{\mu(x)}{\nu(x)}-1\Bigr)_+\leqslant\sum_x\bigl(\mu(x)-\nu(x)\bigr)_+=(1-\alpha)\sum_x\bigl(\nu'(x)-\nu(x)\bigr)_+\leqslant1-\alpha.
\]
Adding the two bounds proves~(ii).
\end{proof}

The next lemma allows us to sum such estimates over the values of a random variable.

\begin{lemma}\label{lem:entropy-sum}
Let $A$ be a random variable taking finitely many values, let $\mathcal V$ be a non-empty set of values of $A$, each of positive probability, and let $p=\Pp(A\in\mathcal V)$.
Let $c$ be a non-negative function on $\mathcal V$ such that $\sum_{a\in\mathcal V}e^{-c(a)}\leqslant1$.
Then
\[
\sum_{a\in\mathcal V}\Pp(A=a)\log^2\frac{1}{\Pp(A=a)}\leqslant8p\Bigl(1+\log^2\frac1p\Bigr)+4\,\E\bigl[c(A)^2;\,A\in\mathcal V\bigr].
\]
\end{lemma}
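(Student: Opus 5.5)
We split the values $a\in\mathcal V$ according to whether $\Pp(A=a)$ is small or large compared with $e^{-2c(a)}$ times a normalising factor. Write $p_a=\Pp(A=a)$ and $x_a=\log(1/p_a)\geqslant0$. Let $\mathcal V_1$ be the set of $a$ with $x_a\leqslant 2c(a)+\log(1/p)+1$ (or a similar threshold), and let $\mathcal V_2=\mathcal V\setminus\mathcal V_1$.

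On $\mathcal V_1$ we use $(u+v)^2\leqslant2u^2+2v^2$ twice:
\[
x_a^2\leqslant 2\cdot4c(a)^2+2\bigl(\log(1/p)+1\bigr)^2\leqslant 8c(a)^2+4\log^2(1/p)+4 .
\]
Summing against $p_a$ then gives at most $8\,\E[c(A)^2;A\in\mathcal V]+4p(1+\log^2(1/p))$. The constant in front of $c^2$ is too large here, so the threshold must be chosen more carefully. Using $x_a\leqslant c(a)+y$ instead gives
\[
x_a^2\leqslant2c(a)^2+2y^2,
\]
which fits the target $4\,\E[c^2]$ with room to spare.

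On $\mathcal V_2$, where $x_a>c(a)+y$, we have $p_a<e^{-c(a)}e^{-y}$. For such small masses the function $t\mapsto t\log^2(1/t)$ is increasing on $(0,e^{-2})$, so we can bound each term by exploiting that the masses are small. The cleanest route is to use
\[
p_a\log^2(1/p_a)\leqslant C\sqrt{p_a},
\]
or better, $t\log^2(1/t)\leqslant 16e^{-2}\sqrt t$. This gives
\[
\sum_{a\in\mathcal V_2}p_a x_a^2\leqslant C e^{-y/2}\sum_{a\in\mathcal V}e^{-c(a)/2}.
\]
The difficulty is that $\sum e^{-c/2}$ is not controlled by $\sum e^{-c}\leqslant1$.

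A better split is the following. On $\mathcal V_2$, write
\[
p_a x_a^2 = p_a x_a^2\,\mathbf 1\{p_a<e^{-c(a)}q\},
\]
and bound it by the maximum of $t\log^2(1/t)$ over $t\leqslant e^{-c(a)}q$. That maximum is $e^{-c(a)}q\,(c(a)+\log(1/q))^2$ when $q\leqslant e^{-2}$. Summing then needs $\sum_a e^{-c(a)}c(a)^2$, which is again not directly bounded, but can be handled by $2c^2e^{-c}\leqslant C e^{-c/2}$-type terms.

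To avoid these issues, I will try a different threshold: compare $p_a$ with $p\,e^{-c(a)}$ itself. Take $\mathcal V_1=\{a: p_a\geqslant p\,e^{-2c(a)}/4\}$. Then on $\mathcal V_1$,
\[
x_a\leqslant2c(a)+\log(1/p)+\log4 .
\]
On $\mathcal V_2$, the term $p_ax_a^2$ is at most $f(pe^{-2c(a)}/4)$, where $f(t)=t\log^2(1/t)$, since $f$ is increasing below $e^{-2}$ (this needs a small case check). Then
\[
f\bigl(pe^{-2c}/4\bigr)=\tfrac p4 e^{-2c}\bigl(2c+\log(4/p)\bigr)^2\leqslant\tfrac p2e^{-c}\bigl(4c^2e^{-c}+\log^2(4/p)\bigr).
\]
Here $4c^2e^{-c}\leqslant 16e^{-2}<3$, so summing with $\sum e^{-c}\leqslant1$ bounds the $\mathcal V_2$ contribution by $Cp\bigl(1+\log^2(1/p)\bigr)$.

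The $\mathcal V_1$ part, however, gives $8c^2$, which exceeds the allowed $4c^2$. To fix this, replace $2c$ by $(1+\eta)c$ with $\eta$ small, using $e^{-(1+\eta)c}\leqslant e^{-c}$ on $\mathcal V_2$. The $\mathcal V_1$ part then becomes $2(1+\eta)^2c^2\leqslant4c^2$ for $\eta=\sqrt2-1$. The $\mathcal V_2$ part is still summable, because $e^{-\eta c}c^2$ is bounded.

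The main obstacle is bookkeeping the constants to land exactly on $8p(1+\log^2(1/p))$. The choice of the split threshold and of $\eta$, together with the monotonicity of $f$ (handled by treating $p_a\geqslant e^{-2}$ separately, where $x_a\leqslant2$), is where care is needed.
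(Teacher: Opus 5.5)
Your final route is sound and genuinely different from the paper's. You make a single cut $\mathcal V_1=\{a:p_a\geqslant\beta p\,e^{-\sqrt2c(a)}\}$. On $\mathcal V_1$ you bound $\log(1/p_a)$ directly, and the factor $2$ in $(u+v)^2\leqslant2u^2+2v^2$ turns $(\sqrt2c)^2$ into exactly $4c^2$. On $\mathcal V_2$ you replace each $p_a$ by the largest value the cut allows. As you diagnosed, this term-by-term worst case needs an exponent strictly above $1$, because $\sum_ae^{-c(a)}c(a)^2$ is not controlled by $\sum_ae^{-c(a)}\leqslant1$. The boundedness of $c^2e^{-(\sqrt2-1)c}$ is what saves you.

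The paper instead normalises $\theta_a=\Pp(A=a)/p$ and uses a layer-cake bound. For every $u\geqslant0$, the values with $\log(1/\theta_a)>c(a)+u$ have total $\theta$-mass less than $e^{-u}$, and integrating $2u$ against this gives $\sum_a\theta_a\bigl(\log(1/\theta_a)-c(a)\bigr)_+^2\leqslant2$. Because this controls all levels of the excess at once, it keeps the sharp threshold $c(a)$ and uses only $\sum e^{-c}\leqslant1$. Splitting $\log(1/\Pp(A=a))=\log(1/p)+\log(1/\theta_a)$ then gives the stated constants with room to spare. Your version avoids the integral but pushes all the difficulty into the constants.

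That last step, which you leave open, does close, but it needs care. First, the case split must be on $t_a=\beta p\,e^{-\sqrt2c(a)}$, not on $p_a$. If $p_a<e^{-2}<t_a$, monotonicity of $f(t)=t\log^2(1/t)$ gives only $f(p_a)\leqslant4e^{-2}$, which can exceed $f(t_a)$. The uniform bound on $\mathcal V_2$ is $f(p_a)\leqslant t_a\max\{\log^2(1/t_a),4\}$. Set $\ell=\log(1/p)$, $y=\ell+\log(1/\beta)$ and $S(y)=\sup_{c\geqslant0}e^{-(\sqrt2-1)c}\max\{(\sqrt2c+y)^2,4\}$. Then the left-hand side is at most $4\E[c(A)^2;A\in\mathcal V]+p\bigl(2y^2+\beta S(y)\bigr)$.

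With $\beta=1$ one has $S(0)=8(\sqrt2+1)^2e^{-2}\approx6.3$, and $2\ell^2+S(\ell)\leqslant8+8\ell^2$ holds for all $\ell\geqslant0$ with slack about $1.5$. With your $\beta=1/4$ it still holds, but the slack is only about $0.14$ near $\ell\approx0.5$. It fails there if $\max\{\log^2(1/t),4\}$ is replaced by $\log^2(1/t)+4$. Nor can this bookkeeping be rescued by shrinking $\beta$ so that every $t_a$ stays below $e^{-2}$: $\beta=e^{-2}$ already gives $2y^2=8$ at $\ell=0$ from $\mathcal V_1$ alone.
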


\begin{proof}
For $a\in\mathcal V$, let $\theta_a=\Pp(A=a)/p$, so that $\sum_{a\in\mathcal V}\theta_a=1$.
For $u\geqslant0$, the values $a\in\mathcal V$ with $\log(1/\theta_a)>c(a)+u$ satisfy $\theta_a<e^{-u}e^{-c(a)}$, so the sum of $\theta_a$ over these values is less than $e^{-u}$.
Hence
\[
\sum_{a\in\mathcal V}\theta_a\Bigl(\log\frac1{\theta_a}-c(a)\Bigr)_+^2=\int_0^\infty2u\sum_{a:\log(1/\theta_a)>c(a)+u}\theta_a\,\mathrm{d}u\leqslant\int_0^\infty2ue^{-u}\,\mathrm{d}u=2,
\]
and therefore $\sum_a\theta_a\log^2(1/\theta_a)\leqslant2\sum_a\theta_ac(a)^2+4$.
Since $\log(1/\Pp(A=a))=\log(1/p)+\log(1/\theta_a)$, using the inequality $(x+y)^2\leqslant2x^2+2y^2$, we obtain
\[
\sum_{a\in\mathcal V}\Pp(A=a)\log^2\frac1{\Pp(A=a)}\leqslant2p\log^2\frac1p+4\sum_{a\in\mathcal V}\Pp(A=a)c(a)^2+8p,
\]
and the lemma follows.
\end{proof}

We now fix the setting in which these estimates will be applied.
Let $(Z,W,Z',W')$ be random variables such that $W$ and $W'$ are uniformly distributed on $\Words_m$.
Suppose that each of $Z$ and $Z'$ is uniformly distributed on $G^k$ and independent of $(W,W')$.
Let $F=-\log q_{Z,m}\bigl(W(Z)\bigr)$ and $F'=-\log q_{Z',m}\bigl(W'(Z')\bigr)$.
Let $A$ be a function of $(W,W')$ taking finitely many values.
For a value $a$ of $A$ with $\Pp(A=a)>0$ and for $z\in G^k$, we write $Q^a_{z}$ and $Q^{\prime a}_{z}$ for the conditional laws of $W(z)$ and of $W'(z)$, respectively, given $A=a$.
The next lemma bounds the second moment of $F-F'$ when, on each event $\{A=a\}$, the elements $W(Z)$ and $W'(Z')$, as well as their conditional laws, differ by the same right translation.

\begin{lemma}\label{lem:transport}
Let $\mathcal V$ be a non-empty set of values of $A$, each of positive probability, let $p=\Pp(A\in\mathcal V)$, and let $c$ be a non-negative function on $\mathcal V$ such that $\sum_{a\in\mathcal V}e^{-c(a)}\leqslant1$.
Suppose that, for each $a\in\mathcal V$, on the event $\{A=a\}$ there is $g\in G$ such that
\begin{equation}\label{eq:transport}
W'(Z')=W(Z)\,g\qquad\text{and}\qquad Q^{\prime a}_{Z'}(xg)=Q^a_{Z}(x)\quad\text{for all }x\in G.
\end{equation}
Then
\[
\E\bigl[(F-F')^2;\,A\in\mathcal V\bigr]\leqslant Cp\Bigl(1+\log^2\frac1p\Bigr)+C\,\E\bigl[c(A)^2;\,A\in\mathcal V\bigr].
\]
\end{lemma}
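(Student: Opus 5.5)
On each event $\{A=a\}$ with $a\in\mathcal V$, I will split $F$ and $F'$ into a conditional part and a cost of conditioning. Since $W$ is independent of $Z$ and $A$ is a function of $(W,W')$, for every $z$ and $x$ we have
\[
q_{z,m}(x)=\Pp(A=a)\,Q^a_z(x)+\Pp(A\neq a)\,\nu^a_z(x),
\]
where $\nu^a_z$ is the conditional law of $W(z)$ given $A\neq a$ (when $\Pp(A\ne a)>0$). The same identity holds for $W'$. Hence on $\{A=a\}$ we can write
\[
F=\log\frac{1}{\Pp(A=a)}+\log\frac{1}{Q^a_Z(W(Z))}-\log\frac{q_{Z,m}(W(Z))}{\Pp(A=a)Q^a_Z(W(Z))}=:\lambda_a+G_a-R,
\]
and similarly $F'=\lambda_a+G'_a-R'$, with the same $\lambda_a=\log(1/\Pp(A=a))$. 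By \eqref{eq:transport}, $Q^{\prime a}_{Z'}(W'(Z'))=Q^{\prime a}_{Z'}(W(Z)g)=Q^a_Z(W(Z))$, so $G_a=G'_a$. Therefore $F-F'=R'-R$ on $\{A=a\}$, and it suffices to bound $\E[R^2;A\in\mathcal V]$ and $\E[R'^2;A\in\mathcal V]$ separately.

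For $R$ I will use Lemma~\ref{lem:mixture}(i) conditionally. Conditional on $A=a$ and on $Z=z$, the variable $X=W(z)$ has law $Q^a_z$, because $Z$ is independent of $(W,W')$. Taking $\mu=q_{z,m}$, $\nu=Q^a_z$ and $\alpha=\Pp(A=a)$, we have $\mu\geqslant\alpha\nu$, so
\[
\E\bigl[R^2\mid A=a,\,Z=z\bigr]=\E\bigl[\log^2(\mu(X)/\nu(X))\bigr]\leqslant\log^2\frac{1}{\Pp(A=a)}+2 .
\]
Averaging over $z$ and summing over $a\in\mathcal V$ gives
\[
\E[R^2;A\in\mathcal V]\leqslant 2p+\sum_{a\in\mathcal V}\Pp(A=a)\log^2\frac{1}{\Pp(A=a)}.
\]
For $R'$ the argument is the same, using that $Z'$ is independent of $(W,W')$ and that $W'(z)$ given $A=a$ has law $Q^{\prime a}_z$. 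This step needs only the independence of each of $Z$ and $Z'$ from $(W,W')$ separately, not joint independence, which is exactly what the hypotheses provide.

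Finally, Lemma~\ref{lem:entropy-sum} bounds the sum by $8p(1+\log^2(1/p))+4\E[c(A)^2;A\in\mathcal V]$. Combining this with $(R'-R)^2\leqslant2R^2+2R'^2$ gives the claim with an absolute constant $C$.

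The main point to check is that \eqref{eq:transport} makes the conditional terms $G_a$ and $G'_a$ cancel exactly, with the translation $g$ allowed to be random (a function of $Z,Z',W,W'$). This works because the second identity in \eqref{eq:transport} is required for all $x$, so it can be applied at the random point $W(Z)$. No step looks genuinely hard; the care needed is in the conditioning on $Z=z$ in the previous paragraph.
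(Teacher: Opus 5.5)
Your proof is correct and is essentially the paper's: on $\{A=a\}$ the translation identity makes $Q^{\prime a}_{Z'}(W'(Z'))=Q^a_Z(W(Z))$. So $F-F'$ is the difference of the two log-ratios $\log\bigl(q_{Z,m}(W(Z))/Q^a_Z(W(Z))\bigr)$ and $\log\bigl(q_{Z',m}(W'(Z'))/Q^{\prime a}_{Z'}(W'(Z'))\bigr)$, each controlled by Lemma~\ref{lem:mixture}(i) with $\alpha=\Pp(A=a)$, and the sum over $a$ is handled by Lemma~\ref{lem:entropy-sum}, exactly as in the paper.

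There is one harmless bookkeeping slip. As you defined it, $R=\log\bigl(\mu(X)/\nu(X)\bigr)+\lambda_a$ rather than $\log\bigl(\mu(X)/\nu(X)\bigr)$. Drop $\lambda_a$ from both decompositions, since it cancels in $F-F'$ anyway, and your conditional bound on $\E[R^2\mid A=a,\,Z=z]$ then holds exactly as stated.
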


\begin{proof}
Fix $a\in\mathcal V$ and $z\in G^k$.
Since $W$ is uniformly distributed on $\Words_m$, the law of $W(z)$ is $q_{z,m}$, and hence $q_{z,m}\geqslant\Pp(A=a)\,Q^a_{z}$.
Likewise $q_{z,m}\geqslant\Pp(A=a)\,Q^{\prime a}_{z}$, since $W'$ is also uniformly distributed.
As $Z$ is independent of $(W,W')$, the conditional law of $W(Z)$ given $A=a$ and $Z=z$ is $Q^a_{z}$.
By Lemma~\ref{lem:mixture}(i), we therefore have
\[
\E\biggl[\log^2\frac{q_{Z,m}(W(Z))}{Q^a_{Z}(W(Z))}\biggm|A=a\biggr]\leqslant\log^2\frac1{\Pp(A=a)}+2,
\]
and the same bound holds with $(Z,W,Q^a)$ replaced by $(Z',W',Q^{\prime a})$.
By \eqref{eq:transport}, we have $Q^{\prime a}_{Z'}\bigl(W'(Z')\bigr)=Q^a_{Z}\bigl(W(Z)\bigr)$ on $\{A=a\}$, so the difference of the two logarithms is $F'-F$.
Using the inequality $(x-y)^2\leqslant2x^2+2y^2$, we obtain
\[
\E\bigl[(F-F')^2\mid A=a\bigr]\leqslant4\log^2\frac1{\Pp(A=a)}+8.
\]
Multiplying by $\Pp(A=a)$, summing over $a\in\mathcal V$, and applying Lemma~\ref{lem:entropy-sum}, we obtain the lemma.
\end{proof}

The following lemma produces the right translation required in \eqref{eq:transport}.

\begin{lemma}\label{lem:intervals}
Let $\mathscr I$ be a set of disjoint intervals, let $D\subseteq[k]$, and let $\mathcal P$ be a set of pairs of words in $\Words_m$ with the following properties:
\begin{enumerate}
\item[(i)] the two words of each pair in $\mathcal P$ coincide outside the intervals of $\mathscr I$;
\item[(ii)] for each $I\in\mathscr I$, the restriction $v_I$ is the same for all $(v,v')\in\mathcal P$, and so is $v'_I$;
\item[(iii)] every occurrence of an index of $D$ in a word of a pair in $\mathcal P$ lies in an interval of $\mathscr I$.
\end{enumerate}
Let $z,z'\in G^k$ satisfy $z_i=z'_i$ for all $i\notin D$, and suppose that $v_I(z)=v'_I(z')$ for all $(v,v')\in\mathcal P$ and $I\in\mathscr I$, except possibly for the rightmost interval of $\mathscr I$ when it ends at $m$.
Then there is $g\in G$ such that $v'(z')=v(z)\,g$ for all $(v,v')\in\mathcal P$.
\end{lemma}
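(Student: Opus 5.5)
Order the intervals of $\mathscr I$ from left to right as $I_1,\dots,I_r$. Their complement in $(0,m]$ is a union of gaps $J_0,J_1,\dots,J_r$, possibly empty, where $J_0$ precedes $I_1$, the gap $J_q$ lies between $I_q$ and $I_{q+1}$, and $J_r$ follows $I_r$. For every pair $(v,v')\in\mathcal P$ we factor both values along this decomposition:
\[
v(z)=v_{J_0}(z)\,v_{I_1}(z)\,v_{J_1}(z)\cdots v_{I_r}(z)\,v_{J_r}(z),
\]
and similarly for $v'(z')$. The plan is to show that the two factorisations agree factor by factor, except possibly in the last interval factor when $I_r$ ends at $m$.

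For a gap $J_q$, property~(i) gives $v_{J_q}=v'_{J_q}$. By property~(iii), no index of $D$ occurs in $v_{J_q}$. Since $z$ and $z'$ agree off $D$, this yields $v_{J_q}(z)=v'_{J_q}(z')$. For an interval $I_q$, the hypothesis gives $v_{I_q}(z)=v'_{I_q}(z')$, except possibly when $q=r$ and $I_r$ ends at $m$.

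If there is no exceptional interval, all factors agree and we take $g=\mathrm{id}$. Otherwise $I_r$ ends at $m$, so $J_r$ is empty and
\[
v(z)=\Pi\,v_{I_r}(z),\qquad v'(z')=\Pi\,v'_{I_r}(z'),
\]
where $\Pi$ is the common value of the earlier factors. Hence $v'(z')=v(z)\,g$ with
\[
g=v_{I_r}(z)^{-1}\,v'_{I_r}(z').
\]

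The main point to check is that $g$ does not depend on the pair. This is exactly what property~(ii) provides: the restrictions $v_{I_r}$ and $v'_{I_r}$ are the same words for every pair in $\mathcal P$, and $z,z'$ are fixed, so $g$ is the same for all pairs. We should also make sure that empty intervals and empty gaps cause no trouble. An empty word has value $\mathrm{id}$, so the factorisation remains valid in those cases. Nothing else looks hard; the argument is bookkeeping.
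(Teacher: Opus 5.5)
Your proof is correct and follows essentially the same argument as the paper. You factor both values along the partition of $[m]$ into the intervals of $\mathscr I$ and the gaps between them, match the gap factors using (i), (iii) and $z_i=z'_i$ for $i\notin D$, and absorb the only possible discrepancy, on a rightmost interval ending at $m$, into $g=v_I(z)^{-1}v'_I(z')$, which does not depend on the pair by (ii).
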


\begin{proof}
Let $(v,v')\in\mathcal P$.
The intervals of $\mathscr I$, together with the maximal intervals between them, partition $[m]$, so $v(z)$ and $v'(z')$ are the products, from left to right, of the values of the corresponding restrictions.
On an interval between those of $\mathscr I$, the words $v$ and $v'$ have the same letters and no index of $D$ occurs, so the two values coincide there, as $z_i=z'_i$ for all $i\notin D$.
On an interval $I\in\mathscr I$, the two values are $v_I(z)$ and $v'_I(z')$.
If these coincide for every $I\in\mathscr I$, then $v'(z')=v(z)$, and we take $g=\mathrm{id}$.
Otherwise, the rightmost interval $I$ of $\mathscr I$ ends at $m$, and $v(z)=x\,v_I(z)$ and $v'(z')=x\,v'_I(z')$ for the same $x\in G$.
Hence $v'(z')=v(z)\,g$, where $g=v_I(z)^{-1}v'_I(z')$.
By~(ii), the element $g$ does not depend on the pair.
\end{proof}

We now choose one of the coordinates of $(B,U,W)$, let $(B',U',W')$ be obtained by replacing it with an independent copy, and let
\[
Z=\Psi_W(B,U)\qquad\text{and}\qquad Z'=\Psi_{W'}(B',U').
\]
Note that $W'=W$ unless the chosen coordinate is a letter, in which case $(B',U')=(B,U)$.
By Lemma~\ref{lem:prefix-law}, the random variables $(Z,W,Z',W')$ satisfy the assumptions above, and $F$ and $F'$ are the information content of the walk before and after the replacement.
Now we can bound the effect of changing a single coordinate.

\begin{proposition}\label{prop:local}
Let $k$ and $m$ be integers with $1\leqslant m\leqslant k/4096$, and suppose that one of the following holds:
\begin{enumerate}
\item[(a)] the chosen coordinate is $U_j$, and $E$ is the event that $j\in S(W)$;
\item[(b)] the chosen coordinate is $B_i$, and $E$ is the event that $N_i(W)\geqslant2$;
\item[(c)] the chosen coordinate is $W_j$, and $E$ is the event that $W'_j\neq W_j$.
\end{enumerate}
If $p=\Pp(E)$ is positive, then
\[
\E\bigl[(F-F')^2;\,E\bigr]\leqslant Cp\Bigl(\log^2\frac1p+\log^2(2k)\Bigr).
\]
\end{proposition}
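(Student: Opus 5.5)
In each case the plan is to apply Lemma~\ref{lem:transport}. We choose a function $A$ of $(W,W')$ that records the data near the changed coordinate, and take $\mathcal V$ to be the values of $A$ on which $E$ holds, so that $\{A\in\mathcal V\}=E$. The translation property \eqref{eq:transport} will be verified through Lemma~\ref{lem:intervals}. Let $T$ be the set of positions affected by the replacement:
\begin{itemize}
\item in case~(a), $T=\{j\}$;
\item in case~(b), $T$ is the set of occurrences of $i$ in $W$;
\item in case~(c), $T=\{j\}$ together with the other occurrences of the old and new indices $i_j,i'_j$ in $W$.
\end{itemize}
Let $\mathscr I$ be the set of blocks of $S(W)\setminus T$ (equivalently of $S(W')\setminus T$) that contain a position of $T$, merged where they overlap, and let $D$ consist of the indices occurring in these blocks, together with $i$ or $i_j,i'_j$. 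The value of $A$ records:
\begin{itemize}
\item the positions of the intervals in $\mathscr I$;
\item the restrictions $W_I$ and $W'_I$ for $I\in\mathscr I$;
\item all occurrences of the indices of $D$ in $W$ and $W'$.
\end{itemize}
Conditions (i)--(iii) of Lemma~\ref{lem:intervals} then hold on each fibre $\{A=a\}$, where $\mathcal P$ is the set of pairs $(W,W')$ in the fibre.

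Next I check that $Z$ and $Z'$ agree off $D$, and that $W_I(Z)=W'_I(Z')$ for every interval $I$ of $\mathscr I$ except one ending at $m$. Each interval $I=(a,c]\in\mathscr I$ has endpoints in $(S(W)\setminus T)\cup\{0,m\}$, so by \eqref{eq:prefix}, $W_{(0,a]}(Z)=U_a$ and $W_{(0,c]}(Z)=U_c$ when these endpoints are singleton positions. Hence $W_I(Z)=U_a^{-1}U_c$, and the same formula with the same $U$'s holds for $(W',Z')$, since the endpoints are outside $T$ and untouched.

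For a singleton index outside $D$, formula \eqref{eq:prefix-coordinates} expresses its generator through the $U$'s at its position and at the previous singleton position, together with the $B$'s in between. I need to check that none of these depends on the changed coordinate, or on whether the positions of $T$ are singletons. This holds because every position whose singleton status changes lies inside an interval of $\mathscr I$. If the rightmost interval ends at $m$, its values may differ, which is exactly the allowed exception.

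This gives $W'(Z')=W(Z)g$, with $g$ depending only on the fibre data and on $(Z,Z')$ restricted to $D$. For the conditional laws, given $A=a$ the words range over the fibre, and with the same $g$ we get $W'(z')=W(z)g$ for every pair. This yields $Q'^a_{Z'}(xg)=Q^a_Z(x)$, provided conditioning on the fibre does not break the coupling of $W$ with $W'$ inside a fibre. That coupling holds because the replacement acts identically on all words of the fibre.

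Lemma~\ref{lem:transport} then bounds $\E[(F-F')^2;E]$ by $Cp(1+\log^2(1/p))+C\E[c(A)^2;E]$. I take $c(a)$ to be a description length of $a$: each recorded letter or occurrence costs about $\log(2k)+\log(2m)$, plus $O(\log|T|)$ for the count. This gives $\sum_a e^{-c(a)}\le1$ and $c(A)\le C\log(2k)\,(L+|T|+M)$, where $L$ is the total block length from Lemma~\ref{lem:blocks} and $M$ is the number of occurrences of indices of $D$. It remains to show
\[
\E\bigl[(L+|T|+M)^2\mid E\bigr]\le C .
\]
Conditional on $E$ and on $T$, the letters outside $T$ are independent with indices uniform on a set $K$ with $|K|\ge k/2$: we exclude the (at most two) indices of $T$, whose other occurrences are exactly $T$. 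So Lemma~\ref{lem:blocks} gives $\E[L^2\mid T]\le C|T|^4$. Moreover $|T|$ given $E$ has geometric-type tails, since $m/k$ is small, so $\E[|T|^4\mid E]\le C$. Finally, $M$ exceeds $L$ only through extra occurrences of the at most $L+2$ indices of $D$ outside $\mathscr I$. Each of these occurs with probability at most about $L/k$ per position, giving a bound of order $mL/k\le L$ in second moment.

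Among these steps, I expect the main obstacle to be the bookkeeping verifying that $Z$ and $Z'$ agree off $D$ and that interval values match, especially in case~(c). There the singleton set itself changes and the merged blocks must be defined consistently for $W$ and $W'$.
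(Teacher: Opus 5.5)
Your strategy is the same as the paper's. You apply Lemma~\ref{lem:transport} to a record $A$ of the blocks of $S(W)\setminus T$ that meet $T$. The translation comes from Lemma~\ref{lem:intervals}, and the cost is controlled by Lemma~\ref{lem:blocks} and the tails of $|T|$. Your intervals coincide with the paper's in case~(b) and are slightly larger in cases~(a) and~(c), which is harmless.

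The step that fails is your choice of $D$ as \emph{all} indices occurring in the intervals of $\mathscr I$.
\begin{itemize}
\item Every position of such an interval, other than its right endpoint and the positions of $T$, is by definition not a singleton position, so its index occurs at least twice in $W$.
\item An index that does not recur inside its block must recur outside it, and in general some of these occurrences lie outside every interval of $\mathscr I$.
\item Recording them in $A$ fixes them on the fibre but does not move them into $\mathscr I$. So hypothesis~(iii) of Lemma~\ref{lem:intervals} is false for your $D$, contrary to your claim.
\end{itemize}
The same over-inclusion creates the term $M$, and your second-moment bound for it is not justified. Conditional on the block structure, the letters outside the blocks are not independent of the block indices, since the recurrences just described are forced. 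So the estimate ``probability about $L/k$ per position'' does not apply.

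The fix is to put into $D$ only indices whose generators can change. These are the indices carried by $T$ (namely $i_j$, or $i$, or $i_j,i'_j$), together with the indices at those right endpoints of intervals of $\mathscr I$ that lie in $S(W)\setminus T$.
\begin{itemize}
\item Your own check shows that $Z$ and $Z'$ agree off this smaller set, since every other non-singleton generator is an untouched coordinate of $B$.
\item Every occurrence of its elements lies in $\bigcup\mathscr I$. The indices carried by $T$ occur, in both words, only at positions of $T$, and each singleton index occurs only at its endpoint.
\item Hence (iii) holds once these occurrences are recorded, and $A$ has only $O(|T|)$ entries besides the letters in the intervals, so $M$ disappears.
\end{itemize}
Your bound $\E[L^2\mid T]\le C|T|^4$, together with $\E[|T|^4\mid E]\le C$, then gives $\E[|A|^2;E]\le Cp$, as in the paper.
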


\begin{proof}
We will construct a function $A$ of $(W,W')$ that takes one value outside $E$, such that \eqref{eq:transport} holds for every value $a$ of $A$ taken on $E$ with $\Pp(A=a)>0$.
On $E$, we take $A$ to be a list of data determined by $(W,W')$.
Each entry of the list is a position, an index, a sign, or one of finitely many labels, and takes at most $2km$ values.
We write $|a|$ for the number of entries of a list $a$, and we let $c(a)=|a|\log(4km)$.
As there are at most $(2km)^b$ lists with $b$ entries, the sum of $e^{-c(a)}$ over all non-empty lists $a$ is at most $\sum_{b\geqslant1}(2km)^b(4km)^{-b}=1$.
We will show that
\begin{equation}\label{eq:record-length}
\E\bigl[|A|^2;\,E\bigr]\leqslant Cp.
\end{equation}
Since $4km\leqslant(2k)^2$, by Lemma~\ref{lem:transport}, we then have
\[
\E\bigl[(F-F')^2;\,E\bigr]\leqslant Cp\Bigl(1+\log^2\frac1p\Bigr)+C\log^2(2k)\,\E\bigl[|A|^2;\,E\bigr]\leqslant Cp\Bigl(\log^2\frac1p+\log^2(2k)\Bigr).
\]

Fix a value $a$ of $A$ taken on $E$, with $\Pp(A=a)>0$.
Its \textit{fibre} is the set of pairs $(v,v')$ with $\Pp\bigl((W,W')=(v,v')\bigr)>0$ and $A(v,v')=a$.
In each case, we specify a set $\mathscr I$ of disjoint intervals and a set $D\subseteq[k]$, both determined by $a$.
We then check the hypotheses of Lemma~\ref{lem:intervals} on the event $\{A=a\}$, with the fibre as $\mathcal P$ and with $(z,z')=(Z,Z')$.
Lemma~\ref{lem:intervals} then provides $g\in G$ such that $v'(Z')=v(Z)\,g$ for every pair $(v,v')$ in the fibre, and in particular $W'(Z')=W(Z)\,g$.
Since $(W,W')$ takes values in the fibre given $A=a$, it follows that, for $(z,z')=(Z,Z')$, the conditional law of $W'(z')$ given $A=a$ is the image of the conditional law of $W(z)$ given $A=a$ under the right translation $x\mapsto xg$.
That is, $Q^{\prime a}_{Z'}(xg)=Q^a_{Z}(x)$ for all $x\in G$, which is \eqref{eq:transport}.

In each case, $A$ records all occurrences of the indices whose generators change, because the identities between products required in Lemma~\ref{lem:intervals} must hold for every pair in the fibre, and not only for the realised pair.

In the three cases below, $w$ denotes a word with $S(w)=\{j_1<\dots<j_s\}$, and we let $j_0=0$.

\textit{Case~(a).}
Here $W'=W$ and $B'=B$.
Let $w$ be a word with $j\in S(w)$, say $j=j_\ell$.
By \eqref{eq:prefix-coordinates}, replacing $U_j$ changes only the generators at the positions $j_\ell$ and, when $\ell<s$, $j_{\ell+1}$.

Suppose first that $\ell<s$.
Let $A$ record the singleton position $j_{\ell+1}$ following $j$, the restriction $w_{(j-1,\,j_{\ell+1}]}$, and the occurrences of the indices $i_j$ and $i_{j_{\ell+1}}$, each of which occurs once.
We take $\mathscr I=\{(j-1,\,j_{\ell+1}]\}$ and $D=\{i_j,i_{j_{\ell+1}}\}$.
Every pair in the fibre has the form $(v,v)$, where $v$ has the recorded restriction.
By the recorded occurrences, the two indices of $D$ occur in $v$ only at $j$ and $j_{\ell+1}$, and $Z$ and $Z'$ agree outside $D$.
By \eqref{eq:prefix}, at both $Z$ and $Z'$, the products of the first $j-1$ letters and of the first $j_{\ell+1}$ letters of $W$ are
\[
U_{j_{\ell-1}}\,w_{(j_{\ell-1},\,j-1]}(B)\qquad\text{and}\qquad U_{j_{\ell+1}},
\]
respectively.
Neither expression involves $U_j$, so the products over $(j-1,\,j_{\ell+1}]$ coincide.

Suppose next that $\ell=s$.
Let $A$ record the fact that $j$ is the last singleton position, the restriction $w_{(j-1,\,m]}$, and the occurrence of $i_j$.
We take $\mathscr I=\{(j-1,m]\}$ and $D=\{i_j\}$.
As before, the index $i_j$ occurs only at $j$ in every word of the fibre, and $Z$ and $Z'$ agree outside $D$.
Since the interval ends at $m$, no identity between the products is required.

In both subcases, the interval in $\mathscr I$ is contained in the block of $S(w)\setminus\{j\}$ containing $j$, and $A$ records at most five entries besides the letters of $w$ in this interval.
Conditional on $W_j$ and on $E$, the letters of $W$ at the other positions are independent, with indices uniformly distributed on $[k]\setminus\{i_j\}$.
By Lemma~\ref{lem:blocks} with $T=\{j\}$, it follows that $\E\bigl[|A|^2\bigm|W_j,\,E\bigr]\leqslant C$, which implies \eqref{eq:record-length}.

\textit{Case~(b).}
Here $W'=W$ and $U'=U$.
Let $w$ be a word with $N_i(w)\geqslant2$.
We call a block of $S(w)$ \textit{marked} if it contains an occurrence of $i$.
The index $i$ is not a singleton, so its generator is $B_i$.
By \eqref{eq:prefix-coordinates}, the coordinate $B_i$ enters the definition of the generator at a singleton position $j_\ell$ precisely when $i$ occurs in $w_{(j_{\ell-1},\,j_\ell-1]}$, that is, when the block $(j_{\ell-1},j_\ell]$ is marked.
Therefore $Z$ and $Z'$ agree outside the set $D$ consisting of $i$ and of the indices at the singleton positions that end marked blocks.

Let $A$ record the marked blocks, the restrictions of $w$ to them, and the occurrences of $i$ and of the indices at these singleton positions.
We take $\mathscr I$ to be the set of marked blocks.
Every pair in the fibre has the form $(v,v)$, where $v$ has the recorded restrictions.
By the recorded occurrences, every occurrence of an index of $D$ in $v$ lies in a marked block.
For a marked block $(j_{\ell-1},j_\ell]$, it follows from \eqref{eq:prefix} that
\[
w_{(j_{\ell-1},j_\ell]}(Z)=U_{j_{\ell-1}}^{-1}U_{j_\ell}=w_{(j_{\ell-1},j_\ell]}(Z'),
\]
since $U$ is unchanged.
The products may differ only on the final block $(j_s,m]$, if it is marked, and this block ends at $m$, as Lemma~\ref{lem:intervals} allows.
Let $T=\{r\in[m]:i_r=i\}$, so that $|T|=N_i(w)$.
Since no position of $T$ is a singleton position, the marked blocks are the blocks of $S(w)\setminus T$ that contain a position of $T$, and besides the letters of $w$ in these blocks, $A$ records at most $6|T|$ entries.
Conditional on the occurrences of $i$ in $W$, the letters of $W$ at the remaining positions are independent, with indices uniformly distributed on $[k]\setminus\{i\}$.
By Lemma~\ref{lem:blocks}, it follows that $\E\bigl[|A|^2;\,E\bigr]\leqslant C\,\E\bigl[N_i(W)^4;\,N_i(W)\geqslant2\bigr]$.
Since $N_i(W)$ is a binomial random variable with parameters $m$ and $1/k$, and $m/k\leqslant1/4096$, the right-hand side is at most $C\,\Pp\bigl(N_i(W)\geqslant2\bigr)=Cp$, which is \eqref{eq:record-length}.

\textit{Case~(c).}
Here $(B',U')=(B,U)$, and $W'$ differs from $W$ at most in its $j$th letter.
Let $(w,w')$ be a pair of words that differ at most at position $j$, and write $w_j=(i_j,\varepsilon_j)$ and $w'_j=(i'_j,\varepsilon'_j)$.
Only the multiplicities of $i_j$ and $i'_j$ differ between $w$ and $w'$.
Hence a position other than $j$ can be a singleton position of $w'$ but not of $w$ only if it is the other occurrence of $i_j$ and $N_{i_j}(w)=2$.
Similarly, it can be a singleton position of $w$ but not of $w'$ only if it is the occurrence of $i'_j$ in $w$, $N_{i'_j}(w)=1$, and $i'_j\neq i_j$.
In particular, at most three positions belong to exactly one of $S(w)$ and $S(w')$.

Let $R$ be the set of positions other than $j$ that are singleton positions of both $w$ and $w'$.
We call a block of $R$ \textit{marked} if it contains $j$ or a position belonging to exactly one of $S(w)$ and $S(w')$, so there are at most four marked blocks.
Let $D$ consist of those of the indices $i_j$ and $i'_j$ that are singletons of $w$ or of $w'$, together with the indices at the elements of $R$ that end marked blocks.
We claim that $Z$ and $Z'$ agree outside $D$.
Let $i\notin D$.
If $i$ is a singleton of neither word, then both generators at $i$ equal $B_i$.
Otherwise, since $i\notin D$, we have $i\notin\{i_j,i'_j\}$, so $i$ has the same multiplicity in both words and is a singleton of both.
Its position lies in $R$, and the block ending there is not marked.
This block does not contain $j$, and contains no singleton position of $w$ or of $w'$ other than its right endpoint.
Its left endpoint is therefore the singleton position preceding the position of $i$ in both words, or $0$, and $w$ and $w'$ coincide on the block.
By \eqref{eq:prefix-coordinates}, the two generators at $i$ are given by the same expression in $(B,U)$, which proves the claim.

Every occurrence of an index of $D$ in $w$ or $w'$ lies in a marked block.
Indeed, an index at an element of $R$ occurs only there, in both words.
If $i_j$ is a singleton of $w$, then its only occurrence in $w$ is at $j$, and it occurs in $w'$ at most at $j$.
If $i_j$ is a singleton of $w'$ but not of $w$, then $N_{i_j}(w)=2$.
Its other occurrence is then a singleton position of $w'$ but not of $w$, and hence lies in a marked block.
The same argument applies to $i'_j$, with the roles of $w$ and $w'$ interchanged.
Now let $(r,r']$ be a marked block with $r'\in R$.
Then $r$ is $0$ or lies in $R$, and by \eqref{eq:prefix} we have
\[
w_{(r,r']}(Z)=U_r^{-1}U_{r'}=w'_{(r,r']}(Z').
\]
Hence the products coincide on every marked block except possibly the final block, which ends at $m$.

Let $A$ record $j$, the letters $w_j$ and $w'_j$, the marked blocks, and the restrictions of $w$ and $w'$ to them.
Let it also record which right endpoints of the marked blocks lie in $R$, and the occurrences in both words of $i_j$, of $i'_j$, and of the indices at these right endpoints.
These data determine $D$, and we take $\mathscr I$ to be the set of marked blocks.
Every pair in the fibre consists of two words that differ at most at position $j$, which lies in a marked block, and has the recorded restrictions.
By the recorded occurrences, all occurrences of indices of $D$ in these words lie in marked blocks.
Thus the hypotheses of Lemma~\ref{lem:intervals} hold.

It remains to prove \eqref{eq:record-length} in this case.
Let $T=\{r\in[m]:i_r\in\{i_j,i'_j\}\}$.
We claim that $R=S(w)\setminus T$.
Indeed, if $r\notin T$, then $i_r$ has the same multiplicity in $w$ and $w'$, so $r\in R$ if and only if $r\in S(w)$.
If $r\in T\setminus\{j\}$, then $i_r$ occurs at both $r$ and $j$ in $w$ or in $w'$, so $r\notin R$, and $j\notin R$ by definition.
The positions belonging to exactly one of $S(w)$ and $S(w')$ lie in $T$, so every marked block is a block of $S(w)\setminus T$ that contains a position of $T$.
Besides the letters of $w$ and $w'$ in the marked blocks, $A$ records at most $C|T|$ entries.
Conditional on $W_j$, $W'_j$, the set $T$, and the letters of $W$ at the positions in $T$, the letters of $W$ at the remaining positions are independent, with indices uniformly distributed on $[k]\setminus\{i_j,i'_j\}$.
Hence, by Lemma~\ref{lem:blocks}, we have $\E\bigl[|A|^2;\,E\bigr]\leqslant C\,\E\bigl[|T|^4\bigr]$.
Given $W_j$ and $W'_j$, the number $|T|-1$ is a binomial random variable with parameters $m-1$ and $|\{i_j,i'_j\}|/k\leqslant2/k$, so $\E[|T|^4]\leqslant C$.
Since $p=1-1/|\alphabet|\geqslant1/2$, this proves \eqref{eq:record-length}, which completes the proof.
\end{proof}

\section{Variance of the information content}\label{sec:variance}

In this section, we prove Theorem~\ref{thm:information}.
Our main tool is the Efron--Stein inequality \cite{ES81,Ste86}; see also \cite[Chapter~3]{BLM13}.
Let $F$ be a square-integrable function of finitely many independent random variables.
For each of these variables, let $F'$ denote the value of $F$ when that variable is replaced with an independent copy.
Then
\begin{equation}\label{eq:efron-stein}
\Var F\leqslant\frac12\sum\E\bigl[(F-F')^2\bigr],
\end{equation}
where the sum is over the variables.
Eberhard and Varj\'u \cite[Section~2]{EV21} used this inequality to bound the variance of the information content for the Chung--Diaconis--Graham process \cite{CDG87}, and Huang and Rao \cite[Section~3.4.2]{HR25} did the same for an auxiliary process on $\Z^k$.

\begin{proof}[Proof of Theorem~\ref{thm:information}]
Let $B$, $U$, and $W$ be as in Section~\ref{sec:coordinates}, and let $F=-\log q_{Z,m}\bigl(W(Z)\bigr)$, where $Z=\Psi_W(B,U)$.
By Lemma~\ref{lem:prefix-law}, the pair $(Z,W)$ has the joint law in Theorem~\ref{thm:information}, so $F$ has the law of $\Info_m$.
We write $F^{B_i}$, $F^{U_j}$, and $F^{W_j}$ for the values of $F$ after the indicated coordinate is replaced with an independent copy.
As in Section~\ref{sec:change}, we write $(B',U',W')$ for the coordinates after the replacement and $Z'=\Psi_{W'}(B',U')$.
We bound the terms of \eqref{eq:efron-stein} using Proposition~\ref{prop:local} and the inequality $p\log^2(1/p)\leqslant1$ for $p\in(0,1]$.

\textit{Replacing $U_j$.}
If $j\notin S(W)$, then $U_j$ is not used, and $F^{U_j}=F$.
By case~(a) of Proposition~\ref{prop:local}, the contribution to $\E\bigl[(F-F^{U_j})^2\bigr]$ of the event that $j\in S(W)$ is at most $C\log^2(2k)$.
Summing over $j$, we obtain
\begin{equation}\label{eq:U-energy}
\sum_{j=1}^m\E\bigl[(F-F^{U_j})^2\bigr]\leqslant Cm\log^2(2k).
\end{equation}

\textit{Replacing $W_j$.}
If $W'_j=W_j$, then $F^{W_j}=F$.
By case~(c) of Proposition~\ref{prop:local}, the contribution of the event that $W'_j\neq W_j$ is at most $C\log^2(2k)$, and hence
\begin{equation}\label{eq:W-energy}
\sum_{j=1}^m\E\bigl[(F-F^{W_j})^2\bigr]\leqslant Cm\log^2(2k).
\end{equation}

\textit{Replacing $B_i$.}
If $i$ is a singleton of $W$, then $B_i$ is not used, and $F^{B_i}=F$.
We treat separately the indices that do not occur in $W$ and those that occur at least twice.
For an index $i$ that does not occur in $W$, replacing $B_i$ does not move the walk, but it changes the law $q_{Z,m}$, since other words may use the generator $Z_i$.
Since there are at least $k-m$ such indices, we need a bound of order $m/k$ for each of them.

Fix $i\in[k]$, and let $\alpha=(1-1/k)^m$ be the probability that $i$ does not occur in $W$.
Note that $\alpha\geqslant1-m/k\geqslant1/2$.
For $z\in G^k$, let $\nu_{z}$ and $\nu'_{z}$ be the conditional laws of $W(z)$ given $N_i(W)=0$ and given $N_i(W)\geqslant1$, respectively.
Then $q_{z,m}=\alpha\nu_{z}+(1-\alpha)\nu'_{z}$, and $\nu_{z}$ does not depend on $z_i$.
On the event $\{N_i(W)=0\}$, replacing $B_i$ changes only the $i$th coordinate of the tuple, so $W(Z')=W(Z)$ and $\nu_{Z'}=\nu_{Z}$ there, and
\[
F-F^{B_i}=\log\frac{q_{Z',m}(W(Z'))}{\nu_{Z'}(W(Z'))}-\log\frac{q_{Z,m}(W(Z))}{\nu_{Z}(W(Z))}.
\]
By Lemma~\ref{lem:prefix-law}, conditional on $N_i(W)=0$ and $Z=z$, the element $W(Z)$ has law $\nu_{z}$, and the same holds with $Z'$ in place of $Z$.
By Lemma~\ref{lem:mixture}(ii), the conditional second moment of each of the two logarithms is therefore at most $3(1-\alpha)\leqslant3m/k$, and we obtain
\begin{equation}\label{eq:B-absent}
\E\bigl[(F-F^{B_i})^2;\,N_i(W)=0\bigr]\leqslant\frac{12m}k.
\end{equation}
Summing \eqref{eq:B-absent} over $i$, we find that the indices that do not occur in $W$ contribute at most $12m$.

Now consider the indices occurring at least twice, and let $p_i=\Pp\bigl(N_i(W)\geqslant2\bigr)$.
If $p_i>0$, then, by case~(b) of Proposition~\ref{prop:local}, the contribution of the event that $N_i(W)\geqslant2$ to $\E\bigl[(F-F^{B_i})^2\bigr]$ is at most $Cp_i\bigl(\log^2(1/p_i)+\log^2(2k)\bigr)$.
We have $\sum_ip_i\leqslant\sum_i\E[N_i(W)]/2=m/2$ and $p_i\leqslant\binom m2k^{-2}\leqslant m/k\leqslant1/4096$.
Since $x\mapsto x\log^2(1/x)$ is increasing on $(0,e^{-2}]$ and there are $k$ indices, it follows that
\[
\sum_{i:p_i>0}p_i\Bigl(\log^2\frac1{p_i}+\log^2(2k)\Bigr)\leqslant m\log^2\frac km+m\log^2(2k)\leqslant2m\log^2(2k).
\]
Combining these bounds, we obtain
\begin{equation}\label{eq:B-energy}
\sum_{i=1}^k\E\bigl[(F-F^{B_i})^2\bigr]\leqslant Cm\log^2(2k).
\end{equation}

Substituting \eqref{eq:U-energy}, \eqref{eq:W-energy}, and \eqref{eq:B-energy} into \eqref{eq:efron-stein}, we conclude that $\Var(\Info_m)=\Var F\leqslant Cm\log^2(2k)$, as required.
\end{proof}

\section{Proof of the main theorem}\label{sec:cutoff}

In this section, we prove Theorems~\ref{thm:main} and~\ref{thm:window}.
First, we record an $L_2$ estimate and a spectral estimate, and we bound the times $t_\omega$.
When $k/\log n\to\infty$, we also show that the entropic time lies between two classical bounds, which already yield cutoff when $\log k>(\log\log n)^2$.
Finally, we prove a continuous-time counterpart of Theorem~\ref{thm:information}, and we combine it with the spectral estimate to prove Theorem~\ref{thm:window}, from which we deduce Theorem~\ref{thm:main}.

For a function $f$ on $G$, we write $\pi(f)=\sum_xf(x)\pi(x)$ and $\|f\|_{2,\pi}=\pi(f^2)^{1/2}$.
The \textit{$L_2$ distance} of a probability measure $\mu$ on $G$ from $\pi$ is $\|\mu-\pi\|_2=\|n\mu-1\|_{2,\pi}$, so that $\|\mu-\pi\|_2^2=n\sum_x\mu(x)^2-1$.
It is related to the total variation distance and to the entropy by
\begin{equation}\label{eq:tv-entropy-chi}
2\|\mu-\pi\|_{\TV}\leqslant\|\mu-\pi\|_2\qquad\text{and}\qquad\log n-\Ent(\mu)\leqslant\|\mu-\pi\|_2^2.
\end{equation}
The first inequality follows from the Cauchy--Schwarz inequality.
The second follows by applying the inequality $\log u\leqslant u-1$ to each term of $\log n-\Ent(\mu)=\sum_x\mu(x)\log\bigl(n\mu(x)\bigr)$.
The following estimate is essentially due to Roichman \cite[Theorem~2]{R96}; see also \cite[Section~7.2]{HOT26}.

\begin{lemma}\label{lem:collision}
Let $G$ be a finite group of order $n$, and let $k$ and $m$ be integers with $1\leqslant m\leqslant k$.
Then
\[
\E\,\|q_{Z,m}-\pi\|_2^2\leqslant(n-1)\binom km\Bigl(\frac mk\Bigr)^{2m}\leqslant n\Bigl(\frac{em}k\Bigr)^m.
\]
\end{lemma}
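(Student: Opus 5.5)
We expand the $L_2$ distance as a collision probability and average over $Z$.
Since $\|q_{Z,m}-\pi\|_2^2=n\sum_xq_{Z,m}(x)^2-1$, we have
\[
\|q_{Z,m}-\pi\|_2^2=n\,\Pp\bigl(W(Z)=W'(Z)\mid Z\bigr)-1,
\]
where $W,W'$ are independent and uniform on $\Words_m$ and independent of $Z$.
Taking expectations over $Z$, it suffices to bound $n\,\Pp\bigl(W(Z)=W'(Z)\bigr)-1$, which we do by conditioning on the pair $(W,W')$.
For a fixed pair $(w,w')$, the value $w(Z)^{-1}w'(Z)$ is the value at $Z$ of the concatenated word $\bar w^{-1}w'$, where $\bar w^{-1}$ denotes the inverse word.
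The key observation is this: if some index $i$ occurs in exactly one of the $2m$ positions of the two words, then this value is uniformly distributed on $G$.
To see this, condition on $Z_j$ for $j\neq i$; the value then has the form $aZ_i^{\pm1}b$ with $a,b$ fixed, which is uniform.
Hence, on the event $E$ that such a \emph{lonely} index exists, the collision probability is exactly $1/n$.
Off $E$, we bound it by $1$.
This gives
\[
\E\,\|q_{Z,m}-\pi\|_2^2\leqslant n\Bigl(\frac1n\Pp(E)+\Pp(E^c)\Bigr)-1=(n-1)\Pp(E^c).
\]

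It remains to show that $\Pp(E^c)\leqslant\binom km(m/k)^{2m}$.
On $E^c$, every index appearing among the $2m$ positions appears at least twice, so the set of indices used has size at most $m$.
Hence the $2m$ indices all lie in some set $K\subseteq[k]$ with $|K|=m$; when $m\leqslant k$ such a $K$ can always be chosen by padding.
A union bound over $K$ gives
\[
\Pp(E^c)\leqslant\binom km\Bigl(\frac mk\Bigr)^{2m},
\]
since the $2m$ indices are independent and uniform on $[k]$.
This is the first inequality.
If one wants to be careful, the signs are irrelevant, and for the directed walk $\alphabet=[k]\times\{1\}$ the same argument applies verbatim.

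For the second inequality, we use $\binom km\leqslant(ek/m)^m$.
Then
\[
\binom km\Bigl(\frac mk\Bigr)^{2m}\leqslant\Bigl(\frac{ek}m\Bigr)^m\Bigl(\frac mk\Bigr)^{2m}=\Bigl(\frac{em}k\Bigr)^m,
\]
and multiplying by $n-1\leqslant n$ finishes the proof.

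The only step needing care is the uniformity claim. A lonely index must be genuinely a single occurrence in the concatenation, not merely a letter appearing once in each word; with that precise definition the argument is routine. I expect no real obstacle.
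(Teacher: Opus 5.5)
Your proof is correct and follows the same route as the paper. You write the expected $L_2$ distance as $n\,\Pp(W(Z)=W'(Z))-1$, use that an index occurring exactly once among the $2m$ letters makes the collision probability exactly $1/n$, bound the complementary event by a union bound over $m$-subsets of $[k]$, and finish with $\binom km\leqslant(ek/m)^m$.
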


\begin{proof}
Let $W$ and $W'$ be chosen uniformly at random from $\Words_m$, with $Z$, $W$, and $W'$ independent.
Conditional on $Z$, the values $W(Z)$ and $W'(Z)$ are independent with law $q_{Z,m}$.
Hence
\[
\E\,\|q_{Z,m}-\pi\|_2^2=n\,\Pp\bigl(W(Z)=W'(Z)\bigr)-1.
\]
Suppose that some index $i$ occurs exactly once among the $2m$ letters of $W$ and $W'$.
We condition on $W$, $W'$, and the coordinates of $Z$ other than $Z_i$.
Since $Z_i$ occurs exactly once in the equation $W(Z)=W'(Z)$, the equation holds for exactly one value of $Z_i$, so its conditional probability is $1/n$.
Otherwise, every index occurring among the $2m$ letters occurs at least twice.
Then all the letters have indices in some $m$-subset of $[k]$.
By a union bound over these subsets, this happens with probability at most $\binom km(m/k)^{2m}$.
Bounding the conditional probability of the equation by one in this case, we obtain
\[
\Pp\bigl(W(Z)=W'(Z)\bigr)\leqslant\frac1n+\Bigl(1-\frac1n\Bigr)\binom km\Bigl(\frac mk\Bigr)^{2m},
\]
and the first inequality follows.
The second follows from $\binom km\leqslant(ek/m)^m$.
\end{proof}

For $z\in G^k$, let
\[
\lambda_*(z)=\max\bigl\{\|P_{z}f\|_{2,\pi}:\pi(f)=0\text{ and }\|f\|_{2,\pi}=1\bigr\}
\]
be the norm of $P_{z}$ on the orthogonal complement of the constant functions.
For the undirected walk, $P_{z}$ is symmetric, so $\lambda_*(z)$ is the largest absolute value of an eigenvalue of $P_{z}$ on this space, and $1-\lambda_*(z)$ is the absolute spectral gap of $P_{z}$; see \cite[Section~12.2]{LP17}.
In particular, $\lambda_*(z)=1$ if $\Cay(G,z)$ is disconnected.
We write $P_{z}^{\top}$ for the transpose of $P_{z}$.
Since $\pi$ is uniform, $P_{z}^{\top}$ is the adjoint of $P_{z}$ in $L_2(\pi)$, and since both matrices fix the constant functions, they preserve the orthogonal complement of the constant functions.
Hence $\|P_{z}^{\top}f\|_{2,\pi}\leqslant\lambda_*(z)\|f\|_{2,\pi}$ whenever $\pi(f)=0$.
The following form of the theorem of Alon and Roichman \cite[Theorem~1]{AR94} is proved in the same way as Lemma~\ref{lem:collision}.

\begin{lemma}\label{lem:spectrum}
Let $G$ be a finite group of order $n\geqslant2$, and let $k\geqslant2^{14}\log n$.
Then
\[
\Pp\Bigl(\lambda_*(Z)^2>\frac{e^3\lceil\log n\rceil}k\Bigr)\leqslant\frac1n.
\]
\end{lemma}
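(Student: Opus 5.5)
The plan is a trace (moment) method, in the spirit of the proof of Lemma~\ref{lem:collision}. Set $r=\lceil\log n\rceil$ and bound $\E\,\lambda_*(Z)^{2r}$, then apply Markov's inequality. Let $M=P_Z^\top P_Z$, viewed as an operator on the space $V_0$ of mean-zero functions. It is self-adjoint and positive semidefinite there, and its largest eigenvalue is $\lambda_*(Z)^2$. Hence
\[
\lambda_*(Z)^{2r}\leqslant\tr\bigl(M^r|_{V_0}\bigr)=\tr\bigl((P_Z^\top P_Z)^r\bigr)-1 .
\]
This works for both walks. In the undirected case $P_Z^\top=P_Z$ and the middle expression is simply $\tr P_Z^{2r}-1$.

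Next, expand the trace as words. Write $P_Z$ as the average over letters $a$ of right multiplication by $z_a$; its transpose is the average of right multiplication by $z_a^{-1}$. Then $\tr\bigl((P_Z^\top P_Z)^r\bigr)=n\,\Pp\bigl(V(Z)=\mathrm{id}\bigr)$, where $V$ is a random word of length $2r$ whose letters alternate between inverted and non-inverted copies of independent uniform letters from $\alphabet$ (or from $[k]\times\{1\}$ in the directed case). Now copy the argument of Lemma~\ref{lem:collision}. If some index occurs exactly once among the $2r$ letters, condition on everything else: the equation $V(Z)=\mathrm{id}$ then holds with probability exactly $1/n$. Otherwise every index occurring appears at least twice, so the indices lie in some $r$-subset of $[k]$, which happens with probability at most $\binom kr(r/k)^{2r}\leqslant(er/k)^r$. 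This gives
\[
\E\,\lambda_*(Z)^{2r}\leqslant n\Bigl(\frac1n+\Bigl(\frac{er}k\Bigr)^r\Bigr)-1\leqslant n\Bigl(\frac{er}k\Bigr)^r .
\]
This step needs $r\leqslant k$, which holds because $k\geqslant2^{14}\log n$.

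Finally, apply Markov with threshold $\beta=e^3r/k$:
\[
\Pp\bigl(\lambda_*^2>\beta\bigr)\leqslant\frac{n(er/k)^r}{\beta^r}=n e^{-2r}\leqslant n\cdot n^{-2}=\frac1n ,
\]
using $r\geqslant\log n$. The factor $e^3$ leaves room to spare, so any small loss in the counting is harmless.

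The main point to check is the trace inequality on $V_0$ in the directed, non-normal case. It is handled by using $P^\top P$ rather than powers of $P$: the norm of $P$ restricted to $V_0$ is the square root of the top eigenvalue of $P^\top P$ on $V_0$. Also, the constant function contributes exactly $1$ to the full trace, which is why we subtract $1$.
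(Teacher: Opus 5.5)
Your proposal is correct and is essentially the paper's proof. You bound $\lambda_*(Z)^{2r}$ by $\tr\bigl((P_Z^{\top}P_Z)^r\bigr)-1$, write this trace as $n$ times the probability that the alternating word $W_1(Z)^{-1}W'_1(Z)\cdots W_r(Z)^{-1}W'_r(Z)$ equals $\mathrm{id}$, split on whether some index occurs exactly once to get the bound $n(er/k)^r$, and finish with Markov's inequality at threshold $e^3r/k$, using $P_Z^{\top}P_Z$ for the directed case exactly as the paper does.
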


\begin{proof}
Let $z\in G^k$, and let $m=\lceil\log n\rceil$, so that $m\leqslant k$.
The matrix $(P_{z}^{\top}P_{z})^m$ is symmetric and positive semidefinite, it fixes the constant functions, and its norm on their orthogonal complement is $\lambda_*(z)^{2m}$.
As its eigenvalues are non-negative, it follows that
\[
\lambda_*(z)^{2m}\leqslant\tr\bigl((P_{z}^{\top}P_{z})^m\bigr)-1.
\]
Let $W$ and $W'$ be chosen uniformly at random from $\Words_m$, with $Z$, $W$, and $W'$ independent, and regard each of their letters as a word of length $1$.
Since $P_{z}^{\top}(x,y)=P_{z}(y,x)$ and $P_{z}(gx,gy)=P_{z}(x,y)$ for all $g,x,y\in G$, we have
\[
\tr\bigl((P_{z}^{\top}P_{z})^m\bigr)=n\,\Pp\bigl(W_1(z)^{-1}W'_1(z)\cdots W_m(z)^{-1}W'_m(z)=\mathrm{id}\bigr).
\]
Suppose that some index $i$ occurs exactly once among the $2m$ letters of $W$ and $W'$.
We condition on $W$, $W'$, and the coordinates of $Z$ other than $Z_i$.
Since $Z_i$ occurs exactly once in the equation
\[
W_1(Z)^{-1}W'_1(Z)\cdots W_m(Z)^{-1}W'_m(Z)=\mathrm{id},
\]
this equation holds for exactly one value of $Z_i$, so its conditional probability is $1/n$.
Otherwise, every index occurring among the $2m$ letters occurs at least twice, so all the letters have indices in some $m$-subset of $[k]$, which happens with probability at most $\binom km(m/k)^{2m}\leqslant(em/k)^m$.
Hence $\E\,\tr\bigl((P_{Z}^{\top}P_{Z})^m\bigr)-1\leqslant n(em/k)^m$, and, by Markov's inequality,
\[
\Pp\Bigl(\lambda_*(Z)^2>\frac{e^3m}k\Bigr)\leqslant\Bigl(\frac k{e^3m}\Bigr)^mn\Bigl(\frac{em}k\Bigr)^m=ne^{-2m}\leqslant\frac1n,
\]
since $m\geqslant\log n$.
\end{proof}

Next, we record two elementary properties of the entropy of the walk.

\begin{lemma}\label{lem:entropy-increments}
Let $z\in G^k$, and let $m\geqslant0$ be an integer.
Then
\[
0\leqslant\Ent(q_{z,m+1})-\Ent(q_{z,m})\leqslant\log(2k).
\]
Consequently, the sequence $(h_m)_{m\geqslant0}$ is non-decreasing, and $0\leqslant h_m\leqslant\min\{m\log(2k),\log n\}$ for every $m$.
\end{lemma}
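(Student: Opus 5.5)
The plan is to compare $q_{z,m+1}$ with right translates of $q_{z,m}$. By the definition of the walk,
\[
q_{z,m+1}=\frac1{|\alphabet|}\sum_{a\in\alphabet}q_{z,m}\,\tau_a,
\]
where for $a=(i,\varepsilon)$ the measure $q_{z,m}\tau_a$ is the image of $q_{z,m}$ under the right translation $x\mapsto xz_i^{\varepsilon}$. The argument has three steps.

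First, the lower bound. Right translation is a bijection of $G$, so each $q_{z,m}\tau_a$ has entropy $\Ent(q_{z,m})$. Entropy is concave, so the entropy of the average is at least the average of the entropies, namely $\Ent(q_{z,m})$.

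Second, the upper bound. Let $X$ have law $q_{z,m}$ and let $A$ be uniform on $\alphabet$, independent of $X$. Then $Xz_{A}^{\varepsilon_A}$ has law $q_{z,m+1}$ and is a function of $(X,A)$. Therefore
\[
\Ent(q_{z,m+1})\leqslant\Ent(X,A)=\Ent(q_{z,m})+\log|\alphabet|.
\]
Here $|\alphabet|=2k$ for the undirected walk and $k\leqslant2k$ for the directed walk, which gives the bound $\log(2k)$ in both cases. If one prefers to avoid joint entropy, the same bound follows from the mixture inequality $\Ent\bigl(\sum_a\lambda_a\mu_a\bigr)\leqslant\sum_a\lambda_a\Ent(\mu_a)+\Ent(\lambda)$ with uniform weights $\lambda$.

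Third, the consequences. Setting $z=Z$ and taking expectations, the lower bound shows that $(h_m)$ is non-decreasing. Since $q_{z,0}=\delta_{\mathrm{id}}$ has entropy $0$, telescoping the upper bound gives $h_m\leqslant m\log(2k)$. Finally, every probability measure on $G$ has entropy at most $\log n$, which holds for instance by the second inequality in \eqref{eq:tv-entropy-chi} because $\|\mu-\pi\|_2^2\geqslant0$; hence $h_m\leqslant\log n$, and $h_m\geqslant0$ is trivial.

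None of these steps is a genuine obstacle. The only point needing care is the upper bound for the directed walk, where the alphabet has size $k$ rather than $2k$; the stated bound still holds since $\log k\leqslant\log(2k)$.
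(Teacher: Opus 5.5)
Your argument is the same as the paper's: concavity of entropy applied to the average of right translates gives the lower bound, and viewing the next position as a function of $(X,A)$ gives the upper bound. The consequences then follow by taking expectations and telescoping from $\Ent(q_{z,0})=0$.

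The one flaw is your justification of $\Ent(\mu)\leqslant\log n$. The second inequality in \eqref{eq:tv-entropy-chi} is an \emph{upper} bound on $\log n-\Ent(\mu)$, so combining it with $\|\mu-\pi\|_2^2\geqslant0$ tells you nothing about the sign of $\log n-\Ent(\mu)$.

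The fact itself is standard, and the paper simply asserts it. To derive it in the same spirit, apply $\log u\leqslant u-1$ with $u=1/(n\mu(x))$ rather than $u=n\mu(x)$:
\[
\Ent(\mu)-\log n=\sum_{x:\mu(x)>0}\mu(x)\log\frac1{n\mu(x)}\leqslant\sum_{x:\mu(x)>0}\Bigl(\frac1n-\mu(x)\Bigr)\leqslant0.
\]
With this replacement, your proof is complete.
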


\begin{proof}
The measure $q_{z,m+1}$ is the average of the right translates of $q_{z,m}$ by the elements $z_i^{\varepsilon}$ with $(i,\varepsilon)\in\alphabet$, each of which has entropy $\Ent(q_{z,m})$.
The lower bound therefore follows from the concavity of entropy.
If $X$ has law $q_{z,m}$ and $Y$ is a letter chosen uniformly at random from $\alphabet$, independently of $X$, then $X\,Y(z)$ has law $q_{z,m+1}$ and is a function of $(X,Y)$.
Hence $\Ent(q_{z,m+1})\leqslant\Ent(q_{z,m})+\log|\alphabet|\leqslant\Ent(q_{z,m})+\log(2k)$.
The remaining assertions follow, since $\Ent(q_{z,0})=0$ and every probability measure on $G$ has entropy at most $\log n$.
\end{proof}

The next lemma bounds the times $t_\omega$ defined in Section~\ref{sec:intro}.

\begin{lemma}\label{lem:omega-bounds}
Let $G$ be a finite group of order $n\geqslant2$, let $k\geqslant2^{14}\log n$, and let $\omega\geqslant1$.
Then
\[
\frac{\log n-2\omega}{\log(2k)}\leqslant t_\omega\leqslant\lceil\log n\rceil.
\]
\end{lemma}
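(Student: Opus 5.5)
The lower bound follows from Lemma~\ref{lem:entropy-increments}. By definition of $t_\omega$, we have $h_{t_\omega}\geqslant\log n-2\omega$, and $h_{t_\omega}\leqslant t_\omega\log(2k)$. Dividing by $\log(2k)$ gives $t_\omega\geqslant(\log n-2\omega)/\log(2k)$. This also covers the case where the lower bound is negative.

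For the upper bound, set $m=\lceil\log n\rceil$. By the monotonicity in Lemma~\ref{lem:entropy-increments}, it suffices to show $h_m\geqslant\log n-2\omega$, and since $\omega\geqslant1$ it is enough to show $\log n-h_m\leqslant2$.

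Since $k\geqslant2^{14}\log n$ and $n\geqslant2$, we have $m\leqslant\log n+1\leqslant k$, so Lemma~\ref{lem:collision} applies. Combining it with the second inequality in \eqref{eq:tv-entropy-chi} and taking expectations gives
\[
\log n-h_m\leqslant\E\,\|q_{Z,m}-\pi\|_2^2\leqslant n\Bigl(\frac{em}{k}\Bigr)^m.
\]
Next we bound the ratio $em/k$. From $m\leqslant\log n+1$ and $\log n\geqslant\log2$ we get $m\leqslant(1+1/\log2)\log n\leqslant 3\log n$. Hence $em/k\leqslant 3e/2^{14}<e^{-1}$. Since also $m\geqslant\log n$,
\[
n\Bigl(\frac{em}{k}\Bigr)^m\leqslant n e^{-m}\leqslant1\leqslant2\omega.
\]
Therefore $h_m\geqslant\log n-2\omega$, and so $t_\omega\leqslant m=\lceil\log n\rceil$.

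There is no real obstacle in this argument. The only points to check are that $m\leqslant k$, so that Lemma~\ref{lem:collision} applies, and that the constant $2^{14}$ makes $em/k\leqslant e^{-1}$ for small $n$ such as $n=2$. Both follow from $m\leqslant3\log n$, as shown above.
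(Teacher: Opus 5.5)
Your proof is correct and follows the paper's argument. The lower bound comes from $h_m\leqslant m\log(2k)$, and the upper bound from applying Lemma~\ref{lem:collision} at $m=\lceil\log n\rceil$ together with \eqref{eq:tv-entropy-chi}; the paper uses the sharper $m\leqslant2\log n$ to get $\log n-h_m\leqslant n^{-7}$, whereas your cruder bound $n(em/k)^m\leqslant1$ already suffices since $\omega\geqslant1$.
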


\begin{proof}
The lower bound follows from the inequality $h_m\leqslant m\log(2k)$ of Lemma~\ref{lem:entropy-increments}.
For the upper bound, let $m=\lceil\log n\rceil$.
Since $\log n\geqslant\log2>1/2$, we have $m\leqslant2\log n\leqslant k/8192$, and hence, by \eqref{eq:tv-entropy-chi} and Lemma~\ref{lem:collision},
\[
\log n-h_m\leqslant\E\,\|q_{Z,m}-\pi\|_2^2\leqslant n\Bigl(\frac{em}k\Bigr)^m\leqslant n\Bigl(\frac e{8192}\Bigr)^{\log n}\leqslant n^{-7}.
\]
Thus $h_m\geqslant\log n-2\omega$, and $t_\omega\leqslant m$, as required.
\end{proof}

For the remainder of this section, $G$ ranges over a sequence of finite groups of order $n\to\infty$, the integers $k=k(G)$ satisfy \eqref{eq:regime}, and $Z=(Z_1,\dots,Z_k)$, where $Z_1,\dots,Z_k$ are chosen independently and uniformly at random from $G$.
Recall the two classical bounds on the mixing time when $k/\log n\to\infty$: the lower bound $\log n/\log k$ comes from the diameter, since before this time the walk can reach only a vanishing proportion of the vertices.
The upper bound $\log n/\log(k/\log n)$ is due to Dou and Hildebrand \cite[Theorem~1]{DH96} and Roichman \cite[Theorems~1 and~2]{R96}; see \cite[Section~1.3.1]{HOT26}.
In this case, we have $\log(k/\log n)\to\infty$ and $\log k=o(\log n)$, so both bounds tend to infinity, and the upper bound is at least the lower bound.
For completeness, we derive both bounds in our setting, and the same computation shows that the expected entropy is close to $\log n$ by time $(1+o(1))\log n/\log(k/\log n)$.

\begin{proposition}\label{prop:time-bounds}
Suppose, in addition, that $k/\log n\to\infty$, and let $\delta\in(0,1)$ be fixed.
Then
\[
\inf_{z\in G^k}d_{z}\Bigl((1-\delta)\frac{\log n}{\log k}\Bigr)\to1\qquad\text{and}\qquad d_{Z}\Bigl((1+\delta)\frac{\log n}{\log(k/\log n)}\Bigr)\inP0.
\]
Moreover, for every fixed $\eta>0$,
\begin{equation}\label{eq:entropy-near-uniform}
\log n-h_m\to0\qquad\text{uniformly in }m\geqslant(1+\eta)\frac{\log n}{\log(k/\log n)}.
\end{equation}
\end{proposition}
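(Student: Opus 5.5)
The plan is to treat the three assertions separately, each by a direct counting or $L_2$ argument.

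\emph{Lower bound.} Fix $z\in G^k$ and let $t=(1-\delta)\log n/\log k$. In discrete time, $\mu_{z,t}$ is supported on the values of words of length $\lfloor t\rfloor$. There are at most $(2k)^{t}$ such values. Since $\log(2k)/\log k\to1$, this is at most $n^{1-\delta/2}$ for large $n$. For any measure $\mu$ supported on a set $A$ we have $\|\mu-\pi\|_{\TV}\geqslant1-|A|/n$, so $d_z(t)\geqslant1-n^{-\delta/2}$, uniformly in $z$.

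In continuous time, the number of jumps by time $t$ is Poisson with mean $t\to\infty$. With probability $1-o(1)$ it is at most $(1+\delta/4)t$. On that event the walk lies in the set of values of words of length at most $(1+\delta/4)t$, which has size at most $C(2k)^{(1+\delta/4)t}\leqslant n^{1-\delta/2}$. The same argument then gives $d_z(t)\geqslant1-o(1)$ uniformly in $z$.

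\emph{Upper bound and \eqref{eq:entropy-near-uniform}.} Let $m=\lceil(1+\eta)\log n/\log(k/\log n)\rceil$. Because $\log(k/\log n)\to\infty$, we have $m=o(\log n)$, so $m\leqslant k$ for large $n$. Lemma~\ref{lem:collision} gives
\[
\E\|q_{Z,m}-\pi\|_2^2\leqslant n\Bigl(\frac{em}{k}\Bigr)^m=\exp\Bigl(\log n-m\log\frac{k}{em}\Bigr).
\]
Since $m\leqslant\log n$, we have $\log(k/(em))\geqslant\log(k/\log n)-1$. Hence $m\log(k/(em))\geqslant(1+\eta)\log n(1-o(1))$, and the right-hand side tends to $0$.

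By \eqref{eq:tv-entropy-chi}, this gives $\log n-h_m\to0$. Since $h$ is non-decreasing by Lemma~\ref{lem:entropy-increments}, the same holds uniformly for all larger $m$, which proves \eqref{eq:entropy-near-uniform}.

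For the total variation statement, take $\eta=\delta/2$. In discrete time, Markov's inequality together with \eqref{eq:tv-entropy-chi} gives $d_Z(m)\inP0$, and monotonicity of $d_Z$ extends this to time $(1+\delta)\log n/\log(k/\log n)\geqslant m$.

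In continuous time, write $p_{Z,t}$ as the Poisson mixture \eqref{eq:poisson-law} and use convexity of the total variation distance:
\[
d_Z(t)\leqslant\Pp(\mathrm{Poi}(t)<m)+\max_{m'\geqslant m}d_Z^{\mathrm{disc}}(m').
\]
The discrete distance $d_Z^{\mathrm{disc}}$ is non-increasing, so the maximum equals $d_Z^{\mathrm{disc}}(m)$. The Poisson term tends to $0$, since $t\geqslant(1+\delta/4)m$ and $t\to\infty$.

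\emph{Main obstacle.} The only delicate step is the arithmetic in the upper bound. One must check that $m\log(k/(em))$ exceeds $\log n$ by a diverging margin. This uses $\log(k/\log n)\to\infty$ to absorb both the constant $e$ and the replacement of $\log n$ by $m$ inside the logarithm. Once that margin is secured, the remaining steps are routine.
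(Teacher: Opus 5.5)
Your proposal is correct and follows the paper's proof essentially step for step: you count reachable vertices for the lower bound (with Chebyshev for the Poisson number of jumps), and for the rest you use Lemma~\ref{lem:collision}, \eqref{eq:tv-entropy-chi}, monotonicity of $h_m$ and of the discrete-time distance, and the Poisson mixture \eqref{eq:poisson-law}. The only difference is cosmetic: you use the cruder bound $\log(k/(em))\geqslant\log(k/\log n)-1$, which gives $\E\|q_{Z,m}-\pi\|_2^2\leqslant n^{-\eta+o(1)}$, whereas the paper expands $\log m$ to obtain $\log(k/(em))\geqslant\log(k/\log n)$ and hence exactly $n^{-\eta}$; either bound suffices.
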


\begin{proof}
Let $t=(1-\delta)\log n/\log k$.
The number of steps of the walk by time $t$ is $\lfloor t\rfloor$ in discrete time, and a Poisson random variable with mean $t$ in continuous time.
Since $t\to\infty$, by Chebyshev's inequality, in both cases this number is at most $(1-\delta/2)\log n/\log k$ with probability tending to one.
At most $2(2k)^{(1-\delta/2)\log n/\log k}$ vertices can be reached from $\mathrm{id}$ in at most this many steps, and
\[
\log\Bigl(2(2k)^{(1-\delta/2)\log n/\log k}\Bigr)=\log2+\Bigl(1-\frac\delta2\Bigr)\Bigl(1+\frac{\log2}{\log k}\Bigr)\log n\leqslant\Bigl(1-\frac\delta3\Bigr)\log n
\]
for large $n$, since $\log k\to\infty$.
Hence, uniformly in $z$, the walk at time $t$ lies with probability tending to one in a set of at most $n^{1-\delta/3}$ vertices, whose uniform measure tends to zero.
This proves the first part.

For the second part and \eqref{eq:entropy-near-uniform}, fix $\eta>0$, and let $m$ be the smallest integer such that $m\log(k/\log n)\geqslant(1+\eta)\log n$.
Then $m=o(k)$, and
\[
\log\frac k{em}=\log\frac k{\log n}+\log\log\frac k{\log n}-\log(1+\eta)-1+o(1)\geqslant\log\frac k{\log n}
\]
for large $n$, since $\log(k/\log n)\to\infty$.
Hence $m\log(k/(em))\geqslant(1+\eta)\log n$, and by Lemma~\ref{lem:collision} we obtain
\[
\E\,\|q_{Z,m}-\pi\|_2^2\leqslant n\Bigl(\frac{em}k\Bigr)^m\leqslant n^{-\eta}.
\]
By \eqref{eq:tv-entropy-chi}, it follows that $\E\,\|q_{Z,m}-\pi\|_{\TV}\to0$ and $\log n-h_m\to0$.
Since $(h_m)$ is non-decreasing by Lemma~\ref{lem:entropy-increments}, the convergence $\log n-h_m\to0$ holds uniformly for all larger $m$, which is \eqref{eq:entropy-near-uniform}.
Finally, for every $z$, the distance $\|q_{z,j}-\pi\|_{\TV}$ is non-increasing in $j$.
Let $t=(1+\delta)\log n/\log(k/\log n)$ and $0<\eta<\delta$.
In discrete time, we have $\lfloor t\rfloor\geqslant m$ for large $n$, and hence $\E\,d_{Z}(t)\to0$.
In continuous time, by \eqref{eq:poisson-law}, $d_{z}(t)$ is at most $\|q_{z,m}-\pi\|_{\TV}$ plus the probability that a Poisson random variable with mean $t$ is less than $m$, and this probability tends to zero by Chebyshev's inequality, so again $\E\,d_{Z}(t)\to0$.
The second part now follows from Markov's inequality.
\end{proof}

As a consequence, the entropic time lies between these two bounds.

\begin{lemma}\label{lem:centre-bounds}
Suppose, in addition, that $k/\log n\to\infty$, and let $\eta>0$ be fixed.
Then, for all sufficiently large $n$,
\[
\frac{\log n-2(\log n)^{2/3}}{\log(2k)}\leqslant t_*\leqslant(1+\eta)\frac{\log n}{\log(k/\log n)}+1.
\]
\end{lemma}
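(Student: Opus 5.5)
The two inequalities come from different earlier results, and I would prove them separately.

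\emph{Lower bound.} This follows from Lemma~\ref{lem:entropy-increments}. By definition of $t_*$ we have $h_{t_*}\geqslant\log n-2(\log n)^{2/3}$. The lemma gives $h_{t_*}\leqslant t_*\log(2k)$. Dividing by $\log(2k)$ yields the claim. Alternatively, one can apply Lemma~\ref{lem:omega-bounds} with $\omega=(\log n)^{2/3}\geqslant1$, which is valid since $k\geqslant2^{14}\log n$; this gives the same bound and also shows that $t_*$ is finite.

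\emph{Upper bound.} I would use \eqref{eq:entropy-near-uniform} from Proposition~\ref{prop:time-bounds}. Let $m$ be the smallest integer with
\[
m\geqslant(1+\eta)\frac{\log n}{\log(k/\log n)},
\]
so that $m\leqslant(1+\eta)\log n/\log(k/\log n)+1$. By \eqref{eq:entropy-near-uniform}, $\log n-h_m\to0$. Hence $\log n-h_m\leqslant1$ for all large $n$, and $1\leqslant2(\log n)^{2/3}$ once $n\geqslant3$. Therefore $h_m\geqslant\log n-2(\log n)^{2/3}$. Since $t_*$ is the minimal integer with this property, $t_*\leqslant m$, which gives the stated bound.

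There is no real obstacle. The only point needing care is that \eqref{eq:entropy-near-uniform} is stated uniformly for real thresholds $m\geqslant(1+\eta)\log n/\log(k/\log n)$, so it applies to the integer $m$ chosen above. The ``$+1$'' in the statement accounts for rounding up to that integer.
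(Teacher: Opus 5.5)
Your proposal is correct and follows essentially the same route as the paper: the lower bound comes from $h_m\leqslant m\log(2k)$, which is exactly how Lemma~\ref{lem:omega-bounds} is proved, and the upper bound applies the uniform convergence \eqref{eq:entropy-near-uniform} to the least admissible integer $m$. Your explicit rounding step and your remark that uniformity is needed because $m$ depends on $n$ spell out what the paper leaves implicit.
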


\begin{proof}
The lower bound holds by Lemma~\ref{lem:omega-bounds}.
By \eqref{eq:entropy-near-uniform}, for large $n$, every integer $m$ in the range of \eqref{eq:entropy-near-uniform} satisfies $h_m\geqslant\log n-2(\log n)^{2/3}$, so $t_*$ satisfies the upper bound.
\end{proof}

When $\log k>(\log\log n)^2$, the two bounds are asymptotically equivalent, and Proposition~\ref{prop:time-bounds} already yields cutoff at time $t_*$.

\begin{lemma}\label{lem:large-range}
Suppose, in addition, that $\log k>(\log\log n)^2$.
Then $t_*\sim\log n/\log k\sim\log n/\log(k/\log n)$, and the walk exhibits cutoff with high probability at time $t_*$.
\end{lemma}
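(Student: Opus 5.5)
First I will check that the two classical bounds are asymptotically equivalent in this range. Write $L=\log n$ and $\ell=\log k$. The hypothesis $\ell>(\log\log n)^2$ gives $\log\log n=o(\ell)$. Hence
\[
\log(k/\log n)=\ell-\log\log n\sim\ell,
\]
and therefore $\log n/\log k\sim\log n/\log(k/\log n)$. Since $\ell\to\infty$, we also have $\log k/\log\log n\to\infty$. In particular $k/\log n\to\infty$, so Proposition~\ref{prop:time-bounds} and Lemma~\ref{lem:centre-bounds} both apply.

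Next I will pin down $t_*$ using Lemma~\ref{lem:centre-bounds}. Its lower bound is $(L-2L^{2/3})/\log(2k)$. Since $L^{2/3}=o(L)$ and $\log(2k)\sim\ell$, this lower bound is $(1-o(1))L/\ell$. For the upper bound, fix $\eta>0$; the lemma gives $t_*\leqslant(1+\eta)L/\log(k/\log n)+1$. By the hypothesis $\log k/\log n\to0$ we have $L/\ell\to\infty$, so the additive $1$ is negligible. Letting $\eta\downarrow0$, the two bounds together give $t_*\sim L/\ell\sim L/\log(k/\log n)$.

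Finally I will derive cutoff from Proposition~\ref{prop:time-bounds}. Fix $\delta\in(0,1)$. For large $n$ we have $(1-\delta)t_*\leqslant(1-\delta/2)L/\ell$. Because $d_z$ is non-increasing, this gives
\[
d_Z\bigl((1-\delta)t_*\bigr)\geqslant\inf_z d_z\bigl((1-\delta/2)L/\log k\bigr)\to1.
\]
Similarly, for large $n$ we have $(1+\delta)t_*\geqslant(1+\delta/2)L/\log(k/\log n)$, so the second part of the proposition gives $d_Z\bigl((1+\delta)t_*\bigr)\inP0$. The proposition covers both walks in both discrete and continuous time, since its proof only counts reachable vertices and uses Lemma~\ref{lem:collision}.

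I do not expect a real obstacle here. The only delicate step is the asymptotic comparison, namely confirming that the error terms ($\log\log n$ against $\log k$, $L^{2/3}$ against $L$, and the $+1$) are all $o(L/\ell)$. Each of these follows from $(\log\log n)^2<\log k=o(\log n)$.
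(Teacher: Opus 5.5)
Your proposal is correct and follows the paper's proof essentially step for step. You derive $\log(k/\log n)\sim\log k$ (and hence $k/\log n\to\infty$) from $\log\log n=o(\log k)$, sandwich $t_*$ using Lemma~\ref{lem:centre-bounds} and let $\eta\to0$, and then obtain cutoff from Proposition~\ref{prop:time-bounds} with $\delta/2$ in place of $\delta$ together with the monotonicity of $d_z$. Your explicit check that the additive $1$ is negligible (via $\log k/\log n\to0$) is a detail the paper absorbs into its $o(1)$ term.
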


\begin{proof}
Since $\log\log n/\log k<1/\log\log n\to0$, we have $k/\log n\to\infty$ and $\log(k/\log n)/\log k=1-\log\log n/\log k\to1$, so the two bounds are asymptotically equivalent.
By Lemma~\ref{lem:centre-bounds}, for every fixed $\eta>0$,
\[
(1-o(1))\frac{\log n}{\log k}\leqslant t_*\leqslant(1+\eta+o(1))\frac{\log n}{\log(k/\log n)},
\]
and letting $\eta\to0$ we obtain the first part.
Now fix $\delta\in(0,1)$.
For large $n$, we have $(1-\delta)t_*\leqslant(1-\delta/2)\log n/\log k$ and $(1+\delta)t_*\geqslant(1+\delta/2)\log n/\log(k/\log n)$.
Since $d_{z}$ is non-increasing, Proposition~\ref{prop:time-bounds}, applied with $\delta/2$ in place of $\delta$, shows that $d_{Z}\bigl((1-\delta)t_*\bigr)\inP1$ and $d_{Z}\bigl((1+\delta)t_*\bigr)\inP0$.
\end{proof}

For $t\geqslant0$, let $X_t$ be a random element of $G$ whose conditional law given $Z$ is $\mu_{Z,t}$.
The next lemma is a continuous-time counterpart of Theorem~\ref{thm:information}.

\begin{lemma}\label{lem:continuous}
Let $t$ be a real number with $1\leqslant t\leqslant k/8192$.
Then
\[
\E\bigl[\bigl(-\log\mu_{Z,t}(X_t)-h_{\lfloor t\rfloor}\bigr)^2\bigr]\leqslant Ct\log^2(2k).
\]
\end{lemma}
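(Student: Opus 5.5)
We treat the two time conventions separately. The discrete case reduces directly to Theorem~\ref{thm:information}. The continuous case reduces to it after conditioning on the number of jumps and paying for three error terms.

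\emph{Discrete time.} Here $\mu_{Z,t}=q_{Z,m}$ with $m=\lfloor t\rfloor$, and we may take $X_t=W(Z)$ with $W$ uniform on $\Words_m$ and independent of $Z$. Then $-\log\mu_{Z,t}(X_t)=\Info_m$. Since $\E[\Info_m\mid Z]=\Ent(q_{Z,m})$, we have $\E\,\Info_m=h_m$, so the left-hand side equals $\Var(\Info_m)$. As $1\leqslant m\leqslant k/8192\leqslant k/4096$, Theorem~\ref{thm:information} bounds it by $Cm\log^2(2k)\leqslant Ct\log^2(2k)$.

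\emph{Continuous time.} Let $M$ be Poisson with mean $t$, and let $W$ be uniform on $\Words_M$ given $M$, all independent of $Z$. Put $X_t=W(Z)$, which has conditional law $p_{Z,t}$ given $Z$ by \eqref{eq:poisson-law}. We write
\[
-\log p_{Z,t}(X_t)-h_{\lfloor t\rfloor}=Y_1+Y_2+Y_3,
\]
where $Y_1=\log\bigl(q_{Z,M}(X_t)/p_{Z,t}(X_t)\bigr)$, $Y_2=-\log q_{Z,M}(X_t)-h_M$, and $Y_3=h_M-h_{\lfloor t\rfloor}$. It suffices to bound each $\E[Y_\ell^2]$ by $Ct\log^2(2k)$.

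\emph{The term $Y_3$.} By Lemma~\ref{lem:entropy-increments}, $|Y_3|\leqslant|M-\lfloor t\rfloor|\log(2k)$. Hence $\E[Y_3^2]\leqslant\log^2(2k)\,\E[(M-\lfloor t\rfloor)^2]\leqslant(t+1)\log^2(2k)$.

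\emph{The term $Y_1$.} Conditional on $M=j$ and $Z=z$, the element $X_t$ has law $q_{z,j}$, and $p_{z,t}\geqslant\Pp(M=j)\,q_{z,j}$. Lemma~\ref{lem:mixture}(i) therefore gives
\[
\E[Y_1^2\mid M=j]\leqslant\log^2\frac1{\Pp(M=j)}+2.
\]
It remains to average $\log^2\bigl(1/\Pp(M=M)\bigr)$ over the Poisson law. This is done either by Lemma~\ref{lem:entropy-sum} with $A=M$ and a cost function such as $c(j)=\log\bigl(C(1+|j-t|)^2\bigr)$, or directly by Stirling's formula. Both give $O(1+\log^2 t)$, plus a term of order $\E[(M-t)^4]/t^2=O(1)$, which is $\leqslant Ct\log^2(2k)$.

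\emph{The term $Y_2$.} On the event $\{M=j\}$, this is $\Info_j-h_j$ for an independent uniform word. For $1\leqslant j\leqslant2t\leqslant k/4096$, Theorem~\ref{thm:information} bounds its conditional second moment by $Cj\log^2(2k)$, which averages to at most $Ct\log^2(2k)$. For $j=0$ the term vanishes. For $j>2t$, the theorem does not apply, and this is the main obstacle. Here we use a crude bound instead. Every word of length $j$ has probability $(2k)^{-j}$, so
\[
0\leqslant-\log q_{z,j}(w(z))\leqslant j\log(2k),
\]
and similarly $0\leqslant h_j\leqslant j\log(2k)$. Thus $Y_2^2\leqslant M^2\log^2(2k)$ on this event. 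Poisson tail bounds give $\E[M^2;\,M>2t]\leqslant Ce^{-ct}t^2\leqslant Ct$.

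Finally, combining the three estimates with $(a+b+c)^2\leqslant3(a^2+b^2+c^2)$ proves the lemma.
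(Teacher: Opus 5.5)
Your proposal is correct and follows the paper's proof essentially step for step: the same reduction of the discrete case to Theorem~\ref{thm:information}, and the same three-term decomposition in continuous time, with the terms handled respectively by Lemma~\ref{lem:mixture}(i), by Theorem~\ref{thm:information} for $1\leqslant j\leqslant 2t$ together with the crude bound $0\leqslant\Info_j\leqslant j\log(2k)$ for larger $j$, and by Lemma~\ref{lem:entropy-increments}. The differences are cosmetic. You use a Chernoff tail where the paper uses the fourth-moment bound $\Pp(N>2t)\leqslant4/t^2$, and you get a sharper $O(1+\log^2 t)$ estimate for the first term where the paper settles for $O(t)$ via $\log^2(N+2)\leqslant N+2$; if you use Lemma~\ref{lem:entropy-sum} for that term, note that it is stated for finitely many values, although its proof works verbatim for the Poisson law.
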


\begin{proof}
For the discrete-time walk, $-\log\mu_{Z,t}(X_t)$ has the law of $\Info_{\lfloor t\rfloor}$, whose mean is $h_{\lfloor t\rfloor}$, and the lemma follows from Theorem~\ref{thm:information}, since $1\leqslant\lfloor t\rfloor\leqslant k/4096$.

For the continuous-time walk, let $N$ be a Poisson random variable with mean $t$, independent of $Z$, and, conditional on $N=m$ and $Z$, let $X$ have law $q_{Z,m}$.
By \eqref{eq:poisson-law}, the conditional law of $X$ given $Z$ is $p_{Z,t}$, so $(Z,X)$ has the law of $(Z,X_t)$.
We write
\[
-\log p_{Z,t}(X)-h_{\lfloor t\rfloor}=\log\frac{q_{Z,N}(X)}{p_{Z,t}(X)}+\bigl(-\log q_{Z,N}(X)-h_N\bigr)+\bigl(h_N-h_{\lfloor t\rfloor}\bigr),
\]
and we bound the second moments of the three terms on the right.

For every $z$ and every integer $m\geqslant0$, we have $p_{z,t}\geqslant\Pp(N=m)\,q_{z,m}$.
By Lemma~\ref{lem:mixture}(i), conditional on $N=m$ and $Z=z$, the second moment of the first term is therefore at most $\log^2(1/\Pp(N=m))+2$.
By Stirling's formula, $\log m!=m\log m-m+O(\log(m+2))$, and together with the inequality $\log u\leqslant u-1$, we obtain
\[
-\log\Pp(N=m)=t-m+m\log\frac mt+O\bigl(\log(m+2)\bigr)\leqslant\frac{(m-t)^2}{t}+O\bigl(\log(m+2)\bigr).
\]
Since $\E[(N-t)^4]=3t^2+t\leqslant4t^2$ and $\log^2(N+2)\leqslant N+2$, the second moment of the first term is at most $Ct$.

Conditional on $N=m$, the random variable $-\log q_{Z,m}(X)$ has the law of $\Info_m$, whose mean is $h_m$.
If $1\leqslant m\leqslant2t$, then $m\leqslant k/4096$, and, by Theorem~\ref{thm:information}, the variance of $\Info_m$ is at most $Cm\log^2(2k)$, while $\Info_0=0$.
For larger $m$, we use the bound $0\leqslant\Info_m\leqslant m\log(2k)$, which holds since $q_{z,m}(w(z))\geqslant(2k)^{-m}$ for every $z\in G^k$ and every $w\in\Words_m$.
Furthermore, $\Pp(N>2t)\leqslant\E[(N-t)^4]/t^4\leqslant4/t^2$, and hence $\E[N^2;\,N>2t]\leqslant(\E N^4)^{1/2}\,\Pp(N>2t)^{1/2}\leqslant Ct$.
It follows that the second moment of the second term is at most $Ct\log^2(2k)$.

Finally, by Lemma~\ref{lem:entropy-increments}, we have $|h_N-h_{\lfloor t\rfloor}|\leqslant|N-\lfloor t\rfloor|\log(2k)$, and $\E[(N-\lfloor t\rfloor)^2]\leqslant t+1\leqslant2t$, so the second moment of the third term is at most $2t\log^2(2k)$.
Combining the three bounds, we obtain the lemma.
\end{proof}

We now combine Lemma~\ref{lem:continuous} with Lemma~\ref{lem:spectrum} to prove Theorem~\ref{thm:window} (cf.~\cite[Lemmas~7 and~8]{S24}).

\begin{proof}[Proof of Theorem~\ref{thm:window}]
By the two conditions on $\omega$, we have $\sqrt{t_\omega}\,\log(2k)=o(t_\omega)$, so $\log^2(2k)=o(t_\omega)$, and both $t_\omega$ and $\omega$ tend to infinity.
By Lemma~\ref{lem:omega-bounds}, we have $t_\omega\leqslant\lceil\log n\rceil\leqslant k/8192$, so $\log^2(2k)=o(\log n)$, and hence \eqref{eq:regime} holds.
By Lemma~\ref{lem:continuous} with $t=t_\omega-1$ and with $t=t_\omega$, for large $n$ we have
\begin{equation}\label{eq:concentration}
\E\bigl[\bigl(-\log\mu_{Z,t}(X_t)-h_t\bigr)^2\bigr]\leqslant Ct_\omega\log^2(2k)=o(\omega^2).
\end{equation}

For the lower bound, let $t=t_\omega-1$.
By the minimality of $t_\omega$, we have $h_t<\log n-2\omega$.
For every $z$, fewer than $ne^{-\omega}$ vertices $x$ satisfy $\mu_{z,t}(x)>e^{\omega}/n$, and $d_{z}(t)$ is at least the sum of $\mu_{z,t}(x)-1/n$ over these vertices.
Hence
\[
\E\,d_{Z}(t)\geqslant\Pp\bigl(\mu_{Z,t}(X_t)>e^{\omega}/n\bigr)-e^{-\omega},
\]
and the probability on the right tends to one by \eqref{eq:concentration} and Chebyshev's inequality, since $h_t<\log n-2\omega$.

For the upper bound, we have $h_{t_\omega}\geqslant\log n-2\omega$, and hence, by \eqref{eq:concentration} and Chebyshev's inequality,
\begin{equation}\label{eq:density-bound}
\Pp\bigl(\mu_{Z,t_\omega}(X_{t_\omega})>e^{3\omega}/n\bigr)\to0.
\end{equation}
Fix $z\in G^k$, and define a function $f$ on $G$ by
\[
f(x)=\begin{cases}n\mu_{z,t_\omega}(x)&\text{if }n\mu_{z,t_\omega}(x)\leqslant e^{3\omega},\\0&\text{otherwise.}\end{cases}
\]
The measure with density $f$ with respect to $\pi$ is the part of $\mu_{z,t_\omega}$ carried by the vertices $x$ with $n\mu_{z,t_\omega}(x)\leqslant e^{3\omega}$, and the remaining part has total mass $1-\pi(f)$.
For every integer $j\geqslant0$, the matrix $P_{z}^j$ maps the first part to the measure with density $(P_{z}^{\top})^jf$, and hence
\[
\bigl\|\mu_{z,t_\omega}P_{z}^j-\pi\bigr\|_{\TV}\leqslant\frac12\bigl\|(P_{z}^{\top})^j\bigl(f-\pi(f)\bigr)\bigr\|_{2,\pi}+1-\pi(f)\leqslant\frac12\lambda_*(z)^je^{3\omega/2}+1-\pi(f),
\]
since $\|f-\pi(f)\|_{2,\pi}^2\leqslant\pi(f^2)\leqslant e^{3\omega}$.

Now suppose that $\lambda_*(z)^2\leqslant e^3\lceil\log n\rceil/k$.
Since $\lceil\log n\rceil\leqslant2\log n$ and $\log(k/\log n)\geqslant14\log2>9$, we have
\[
\log\frac1{\lambda_*(z)}\geqslant\frac12\log\frac k{\log n}-\frac{3+\log2}2\geqslant\frac14\log\frac k{\log n}.
\]
Let $j=\lceil8\omega/\log(k/\log n)\rceil$, so that $\lambda_*(z)^j\leqslant e^{-2\omega}$.
In discrete time, it follows that $d_{z}(t_\omega+j)\leqslant\frac12e^{-\omega/2}+1-\pi(f)$.
In continuous time, we have $\mu_{z,t_\omega+2j}=\sum_{m\geqslant0}\Pp(N=m)\,\mu_{z,t_\omega}P_{z}^m$, where $N$ is a Poisson random variable with mean $2j$, and therefore, by Chebyshev's inequality,
\[
d_{z}(t_\omega+2j)\leqslant\Pp(N<j)+\frac12e^{-\omega/2}+1-\pi(f)\leqslant\frac2j+\frac12e^{-\omega/2}+1-\pi(f).
\]
Since $\omega/\log(2k)\to\infty$, we have $j\to\infty$ and $2j\leqslant20\omega/\log(k/\log n)$ for large $n$.
As $d_{z}$ is non-increasing and $d_{z}\leqslant1$, in both cases we obtain
\[
\E\,d_{Z}\Bigl(t_\omega+\frac{20\,\omega}{\log(k/\log n)}\Bigr)\leqslant\Pp\Bigl(\lambda_*(Z)^2>\frac{e^3\lceil\log n\rceil}k\Bigr)+\frac2j+\frac12e^{-\omega/2}+\E\bigl[1-\pi(f)\bigr],
\]
where $f$ is defined with $Z$ in place of $z$.
The quantity $1-\pi(f)$ is the probability that the walk at time $t_\omega$ is at a vertex $x$ with $n\mu_{z,t_\omega}(x)>e^{3\omega}$, and its expectation over $Z$ tends to zero by \eqref{eq:density-bound}.
By Lemma~\ref{lem:spectrum}, the first term on the right is at most $1/n$, and the upper bound follows.
\end{proof}

\begin{proof}[Proof of Theorem~\ref{thm:main}]
By Lemma~\ref{lem:omega-bounds}, $t_*$ is finite for large $n$.
On the part of the sequence on which $\log k>(\log\log n)^2$, cutoff with high probability at time $t_*$ holds by Lemma~\ref{lem:large-range}, so it suffices to consider the part on which $\log k\leqslant(\log\log n)^2$.
On this part, we apply Theorem~\ref{thm:window} with $\omega=(\log n)^{2/3}$, so that $t_\omega=t_*$.
By Lemma~\ref{lem:omega-bounds}, we have $(\log n-2(\log n)^{2/3})/\log(2k)\leqslant t_*\leqslant\lceil\log n\rceil$, and since $\log(2k)\leqslant2(\log\log n)^2$ for large $n$, it follows that $\sqrt{t_*}\,\log(2k)=o\bigl((\log n)^{2/3}\bigr)$ and $(\log n)^{2/3}=o(t_*)$, which are the conditions of Theorem~\ref{thm:window}.
Hence $\E\,d_{Z}(t_*-1)\to1$ and $\E\,d_{Z}\bigl(t_*+20(\log n)^{2/3}/\log(k/\log n)\bigr)\to0$.
Fix $\delta\in(0,1)$.
For large $n$ in this part, we have $(1-\delta)t_*\leqslant t_*-1$ and $(1+\delta)t_*\geqslant t_*+20(\log n)^{2/3}/\log(k/\log n)$.
Since $d_{z}$ is non-increasing, it follows that
\[
\E\bigl[1-d_{Z}\bigl((1-\delta)t_*\bigr)\bigr]\to0\qquad\text{and}\qquad\E\,d_{Z}\bigl((1+\delta)t_*\bigr)\to0.
\]
By Markov's inequality, $d_{Z}\bigl((1-\delta)t_*\bigr)\inP1$ and $d_{Z}\bigl((1+\delta)t_*\bigr)\inP0$ on this part of the sequence, which completes the proof.
\end{proof}

\section*{Use of generative AI}

The author used ChatGPT 5.6 Plus to suggest candidate approaches, to assist with some of the proofs, to identify relevant terminology, and to proofread the manuscript. The mathematical content, the final text, and any remaining errors are
the responsibility of the author.

\end{document}